\documentclass[12pt]{extarticle}
\usepackage[utf8]{inputenc}
\usepackage[T1]{fontenc}

\usepackage{caption}
\tolerance 10000
\headheight 0in
\headsep 0in
\evensidemargin 0in
\oddsidemargin \evensidemargin
\textwidth 6.5in
\topmargin .25in
\textheight 8.8in
\synctex=1

\usepackage{amssymb}
\usepackage{amsmath}
\usepackage{comment}
\usepackage{amsthm}
\usepackage{amssymb}
\usepackage{algorithmicx}
\usepackage{algorithm}
\usepackage{algpseudocode}
\usepackage{mathtools}
\usepackage[margin=1cm]{caption}
\usepackage{subcaption}
\usepackage{mathrsfs}
\usepackage{multirow}
\usepackage[dvipsnames]{xcolor}
\usepackage{tikz,tikz-3dplot,tikz-cd}
\usetikzlibrary{external}
\tikzexternalize
\usepackage{pgfplots}
\usepackage{url}
\usepackage{csquotes}
\usepackage{MnSymbol}
\usepackage[backend=biber,
style=alphabetic,doi=false]{biblatex} 
\addbibresource{literature.bib} 

\usepackage[normalem]{ulem}

\usepackage{hyperref}
\hypersetup{
    colorlinks=true,
    linkcolor=orange!80!black,
    filecolor=magenta,      
    urlcolor=blue,
    citecolor=blue,
}

\definecolor{mycolor1}{rgb}{0.00000,0.44700,0.74100}
\definecolor{mycolor2}{rgb}{0.8500, 0.3250, 0.0980}
\definecolor{mycolor3}{rgb}{0.9290, 0.6940, 0.1250}
\definecolor{mycolor4}{rgb}{0.4940, 0.1840, 0.5560}
\definecolor{mycolor5}{rgb}{0.4660, 0.6740, 0.1880}

\theoremstyle{definition}
\newtheorem{definition}{Definition}[section]
\theoremstyle{theorem}

\newtheorem{proposition}[definition]{Proposition}
\newtheorem{corollary}[definition]{Corollary}
\newtheorem{theorem}[definition]{Theorem}

\newtheorem{lemma}[definition]{Lemma}

\newtheorem*{theorem*}{Theorem}

\theoremstyle{remark}
\newtheorem{remark}[definition]{Remark}

\newenvironment{example}
  {\pushQED{\qed}\examplex}
  {\popQED\endexamplex}

\usepackage{mathtools} 
\mathtoolsset{showonlyrefs,showmanualtags}
\numberwithin{equation}{section}

\newcommand{\R}{\mathbb{R}}

\newcommand{\C}{\mathbb{C}}

\newcommand{\Q}{\mathbb{Q}}
\newcommand{\PP}{\mathbb{P}}

\newcommand{\im}{\operatorname{im}}

\newcommand{\wt}{\widetilde}

\DeclareMathOperator{\gr}{Gr}
\newcommand{\grk}{\operatorname{Gr}(k,k+2)}

\newcommand{\aA}{\mathcal{A}}
\newcommand{\CH}{\operatorname{Ch}}
\newcommand{\crank}{\operatorname{cr}}
\newcommand{\sing}{\operatorname{Sing}}
\newcommand{\open}{\operatorname{int}}
\newcommand{\codim}{\operatorname{codim}}
\newcommand{\odd}[1]{([#1])}


\title{Positive Genus Pairs from Amplituhedra}
\author{Joris Koefler, Dmitrii Pavlov, Rainer Sinn}
\date{}

\begin{document}

\maketitle
 
\begin{abstract}
A main conjecture in the field of Positive Geometry states that amplituhedra, which are certain semi-algebraic sets in the Grassmannian, are positive geometries. It is motivated by examples showing that the canonical forms of certain amplituhedra compute scattering amplitudes in particle physics. Beyond a small number of special cases, this conjecture is still open. In recent work, Brown and Dupont introduced a new framework, based on mixed Hodge theory, connecting canonical forms and de Rham cohomology via genus zero pairs.
We give short proofs that the amplituhedron gives rise to a genus zero pair in the cases when it is known to be a positive geometry. 
However, in the general case we show that amplituhedra inside the Grassmannian give rise to pairs of strictly positive genus.
We provide an explicit example of a genus one pair arising from a positive geometry in projective space, showing that having genus zero is not a necessary condition to be a positive geometry.
Finally, we show that this positive geometry still gives rise to a genus zero pair in a different ambient variety. 
\end{abstract}

\section{Introduction}
The \emph{amplituhedron} $\mathcal{A}_{n,k,m}(Z)$, introduced by Arkani-Hamed and Trnka in \cite{Amplituhedron}, is a geometric object of great importance in particle physics, where it is used to calculate scattering amplitudes in $\mathcal{N}=4$ super Yang--Mills theory.
It is defined to be the image of the totally nonnegative Grassmannian $\gr(k,n)_{\geq 0}$ \cite{postnikov} under the linear map $\tilde{Z}:\mathrm{Gr}(k,n)\dashrightarrow\mathrm{Gr}(k,k+m)$ induced by a $n\times (k+m)$ matrix $Z$ all of whose minors are positive. 
Here $n,k,m$ are positive integers such that $k+m\leq n$. 
The case of immediate relevance for physics is $m=4$ where $m$ indicates the dimension of space-time; the case $m=2$ serves as an important toy model.

The discovery of the amplituhedron stimulated active search for further geometric objects that describe scattering amplitudes in different quantum field theories. 
Certain physical properties that an amplitude satisfies can be phrased mathematically in terms of poles and residues of differential forms. 
This observation lead to the introduction of the framework of \emph{positive geometries} in \cite{pos_geom_2017}. 
A central conjecture of \cite{pos_geom_2017} is that the amplituhedron $\mathcal{A}_{n,k,m}(Z)$ is a positive geometry for all values of $n,k,m$ and generic choices of $Z$. 
Despite a vast amount of mathematical work on the amplituhedron that appeared in the recent years (see e.g. \cite{parisi:meq2, lukowski2023meq2, parisi2024magic, lam_face_structure} for $m=2$ and \cite{amplituhedronbcfwtriangulation, meq4} for $m=4$), this conjecture is still wide open. 
The only settled cases are (1) $k=1$: the amplituhedron is a cyclic polytope, see \cite{bernd1988}, \cite[Theorem 1]{lam2024invitation}; (2) $k+m=n$: the amplituhedron is isomorphic to $\mathrm{Gr}_{\geq 0}(k,n)$ \cite[Theorem 9]{lam2024invitation}; (3) $k=m=2$: see \cite[Theorem 1.3]{amplituhedron_adjoint}. 

In recent work, Brown and Dupont \cite{brown-dupont} recast the notion of positive geometries in the language of Deligne's mixed Hodge theory \cite{Deligne1971}. In this context, positive geometries come from pairs of algebraic varieties $(X,Y)$ that have genus zero.
The genus of the pair is defined as the sum of outer Hodge numbers $h^{p,0}$ for $p>0$, of the mixed Hodge structure on the relative cohomology group $H^n(X,Y)$ with $\Q$-coefficients, where $n=\dim X$. A possible definition of positive geometries within this framework is given in \cite[Definition 1.13]{telen2025positivegeometrypolytopespolypols}.

In the framework of \cite{pos_geom_2017} a positive geometry is a pair $(X,X_{\geq 0})$, where $X$ is a complex variety defined over $\mathbb{R}$ and $X_{\geq 0}$ is a semi-algebraic set inside its real points. In the setup of \cite{brown-dupont} one works with pairs $(X,Y)$ of complex varieties such that $Y\subset X$. The way to go between these two approaches is to set $Y=\partial_a X_{\geq 0}$, where $\partial_a$ denotes the algebraic boundary operator (see Section \ref{sec:posGeom} for a definition). 

The notion of positive geometries in \cite{pos_geom_2017} and that in \cite{telen2025positivegeometrypolytopespolypols} based on \cite{brown-dupont} are not equivalent. For instance, a positive geometry in the sense of \cite{pos_geom_2017} is a real algebraic object, while the real numbers play no role in the framework of \cite{brown-dupont}.
It has been an open question if there exist positive geometries in the sense of \cite{pos_geom_2017} that do not give rise to genus zero pairs of \cite{brown-dupont}.
In Section \ref{sec:genus_one_PG} we answer this question by giving an example of a positive geometry which gives rise to a pair of genus one inspired by our results for amplituhedra.

The guiding question of the present paper asks for the relation between the framework in \cite{brown-dupont} and the amplituhedron $\mathcal{A}_{n,k,m}(Z)$ for $m=2$ and $m=4$. We now describe our main results. 
On the positive side, for the cases in which the amplituhedron is known to be a positive geometry, in Section \ref{sec:vanishing_genus} we give a very short proof showing that the pair $(\mathrm{Gr}(k,k+m),\partial_a \mathcal{A}_{n,k,m}(Z))$ has genus zero (Theorem \ref{thm:m=2_k=2_has_genus_zero}). 
Here $\partial_a \mathcal{A}_{n,k,m}(Z)$ denotes the algebraic boundary of the amplituhedron. 
On the negative side, for $k\geq 3$ and large enough $n$ we discover that this pair has strictly positive genus (Theorems \ref{thm:pos_genus} and \ref{thm:genus_m=4}), meaning that the amplituhedron inside the Grassmannian is not, in general, a positive geometry in the sense of \cite[Definition 1.13]{telen2025positivegeometrypolytopespolypols}.
However, in view of the example constructed in Section \ref{sec:genus_one_PG}, we note that positivity of the genus does not immediately disqualify amplituhedra from being positive geometries in the sense of \cite{pos_geom_2017}. This raises the question of how to make the framework of \cite{brown-dupont} match that of \cite{pos_geom_2017}. We comment on a possible approach to this, suggested to us by Cl\'ement Dupont, in Remark \ref{rem:final}.

The remainder of this article is organized as follows. In Section \ref{sec:prelim}, we give necessary definitions and review the main concepts that we will use in the proofs of later sections.
This includes basic notions of positive geometry and mixed Hodge theory.
In Section \ref{sec:vanishing_genus} we show that for the cases $k=1$, $k+m=n$ and $k=m=2$ the pair $(\gr(k,k+m), \partial_a \mathcal{A}_{n,k,m}(Z))$ has genus zero for a generic choice of the matrix $Z$.
In Section \ref{sec:m=2} we prove that for $m=2$ and $n\geq 2(2k-1)$ the pair above has strictly positive genus. Section \ref{sec:m=4} is devoted to an analogous statement for $m=4$.
Finally, in Section \ref{sec:genus_one_PG} we construct an example of a semi-algebraic set $P\subset \PP^3$ which is a positive geometry in the sense of \cite{pos_geom_2017}, but such that the pair $(\PP^3,\partial_aP)$ has genus one.

\section{Preliminaries} \label{sec:prelim}

The \emph{Grassmannian} $\gr(k,n)$ is the variety of $k$-dimensional vector spaces contained in a fixed $n$-dimensional complex vector space. It is an irreducible smooth complex projective algebraic variety of dimension $k(n-k)$. 
We usually consider the Grassmannian in its Pl\"ucker embedding into $\PP^{ \binom{n}{k}-1}$.
This embedding is given by taking a $k\times n$ full rank matrix $\Lambda$, representing a point $[\Lambda]\in \gr(k,n)$ and defining the coordinates on $\PP^{ \binom{n}{k}-1}$ to be the $k\times k$ minors of $\Lambda$.
If it is clear from context, we will often write $\Lambda$ both for the point in the Grassmannian and a matrix representative of it.
It is well known, see e.g. \cite[Corollary 2.33]{degGrkn}, that the degree of the Grassmannian $\gr(k,n)$ in this embedding is
\begin{align}\label{eq:Grkn_deg}
    \deg\gr(k,n)=(k(n-k))! \cdot 
        \frac{0! \cdot 1! \cdot \ldots \cdot (k-1)!}{(n-k)! \cdot (n-k+1)! \cdot \ldots \cdot (n-1)!}.
\end{align}
In the case of $k=2$, that is in the case of the Grassmannian of lines, this simplifies to 
\begin{align}\label{eq:Gr2_deg}
    \deg \gr(2,n)=C_{n-2},
\end{align}
where $C_{n-2}$ is the $(n-2)$-th Catalan number.
The \emph{non-negative Grassmannian} $\gr(k,n)_{\geq 0}$ \cite{postnikov} is the semi-algebraic subset of $\gr(k,n)$, defined as the intersection of $\gr(k,n)$ in its Pl\"ucker embedding into $\PP^{ \binom{n}{k}-1}$ with the non-negative orthant $\PP^{ \binom{n}{k}-1}_{\geq 0}$. 
Equivalently, $\gr(k,n)_{\geq 0}$ consists of all linear spaces represented by real matrices $\Lambda\in \R^{k\times n}$ of rank $k$ whose $k\times k$ minors are all non-negative, up to a global change of sign. 
A (tree-level) amplituhedron depends on the choice of positive integer parameters $k,m$, and $n$, with $n\geq k+m$, and on the choice of a $n\times (k+m)$ matrix $Z$ with positive maximal minors. In this paper we are going to fix $m=2$ or $m=4$. The matrix $Z$ induces a rational map
\[
\widetilde{Z}:\gr(k,n) \dashrightarrow \gr(k,k+m),\quad V\mapsto VZ.
\]
By the assumption on $Z$, this map is well-defined on $\gr(k,n)_{\geq 0}$. The image $\wt{Z}(\gr(k,n)_{\geq 0}$ of the non-negative Grassmannian under this map is called the \emph{(tree-level) amplituhedron} $\aA_{n,k,m}(Z)$, first defined in \cite{Amplituhedron}. 
Henceforth, we will often suppress writing the dependence on $Z$, that is $\aA_{n,k,m}=\aA_{n,k,m}(Z)$.
The amplituhedron is a regular semi-algebraic set in $\gr(k,k+m)$, i.e. it is the Euclidean closure of its interior. In particular, its Euclidean boundary is $\partial\aA_{n,k,m}=\aA_{n,k,m}\setminus\open \aA_{n,k,m}$.
Then, we define the \emph{algebraic boundary} $\partial_a\aA_{n,k,m}\subset\gr(k,k+m)$ as the Zariski closure of the Euclidean boundary $\partial\aA_{n,k,m}$.

The amplituhedron, or rather its algebraic boundary, is the central object of this article.
Therefore, we recall the descriptions of the algebraic boundary of the amplituhedron $\aA_{n,k,m}$ for $m=2$ and $m=4$.
These commonly use the notion of \emph{twistor coordinates}, see e.g. \cite{hodges2013eliminating} or \cite[Definition 4.5]{williams2023positive}. The twistor coordinate $\langle Yi_1\dots i_m\rangle$ (or $\langle YZ_{i_1}\dots Z_{i_m}\rangle$) is defined as the determinant of the $(k+m)\times (k+m)$ matrix obtained by stacking a matrix representing $Y\in\gr(k,k+m)$ together with the rows of $Z$ labeled by $i_1,\ldots, i_m\in [n]$.
A twistor coorindate can be regarded as a function on $\gr(k,k+m)$ and its vanishing locus is a hyperplane section of $\gr(k,k+m)$. 
It will be convenient for an index $i>n$, to take it modulo $n$, such that $i \mod n \in [n]$, in which case we say that the indices are \emph{read cyclically}.
We now record the algebraic boundary of $\partial_a A_{n,k,2}$ as in \cite[Proposition 3.1]{amplituhedron_adjoint}.

\begin{proposition}\label{prop:m=2alg_bd}
    For $m=2$ the algebraic boundary $\partial_a \aA_{n,k,2}$ of $\aA_{n,k,2}$, with $n \geq k+2$,
    is the union of the following hyperplane sections of $\gr(k,k+2)$:
    \[
    \langle YZ_i Z_{i+1} \rangle = 0,
    \]
    where $Y\in \gr(k,k+2)$ and $i\in [n]$, with indices read cyclically.
\end{proposition}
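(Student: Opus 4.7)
The plan is to combine the twistor-inequality description of the interior of $\aA_{n,k,2}$ with a codimension count on the boundary strata. I would begin by recording the sign-flip characterization of the interior: by \cite{parisi:meq2, lukowski2023meq2} (extending the $k=1$ case of Karp--Williams), the interior of $\aA_{n,k,2}$ is cut out in $\gr(k,k+2)$ by the strict inequalities $\langle YZ_iZ_{i+1}\rangle>0$ for all cyclic $i\in[n]$, together with a fixed number of sign changes in each of the sequences $(\langle YZ_iZ_j\rangle)_j$. In particular $\partial\aA_{n,k,2}$ is contained in the locus where at least one twistor coordinate vanishes; and since each $\{\langle YZ_iZ_{i+1}\rangle=0\}$ is a hyperplane section of $\gr(k,k+2)$ of pure codimension one, the algebraic boundary $\partial_a\aA_{n,k,2}$ is a priori a union of such sections together with possible further components arising from non-adjacent twistors or sign-flip degenerations.

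For the easy inclusion ``each adjacent hyperplane section is a component of $\partial_a \aA_{n,k,2}$'' I would, for one choice of $i$, produce an explicit codimension-one positroid cell of $\gr(k,n)_{\geq 0}$ (available from the positroid classification in \cite{postnikov}) whose image under $\tilde Z$ is full-dimensional inside $\{\langle YZ_iZ_{i+1}\rangle=0\}\cap \gr(k,k+2)$. The cyclic symmetry of the amplituhedron, inherited from the cyclic shift on the columns of $Z$, then handles the remaining values of $i$.

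The bulk of the work is the converse: no further codimension-one components appear. Here one must verify that (a) if a non-adjacent twistor $\langle YZ_iZ_j\rangle$ with $j\neq i\pm 1$ vanishes on $\partial\aA_{n,k,2}$, the resulting boundary locus sits in codimension at least two in $\gr(k,k+2)$, and (b) degenerations of the sign-flip count in the characterization of the interior, occurring without an adjacent twistor vanishing, require the simultaneous vanishing of at least two twistors and thus also live in codimension $\geq 2$. This case analysis is the principal technical obstacle: it requires tracking the images under $\tilde Z$ of all higher-codimension positroid cells of $\gr(k,n)_{\geq 0}$ and checking that none lands full-dimensionally in a hyperplane outside the claimed union. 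Much of this bookkeeping is already carried out in the facet enumeration of $m=2$ amplituhedra in \cite{parisi:meq2, lam_face_structure}, which one can invoke directly to conclude.
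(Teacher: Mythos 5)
There is nothing in the paper to compare your argument against: Proposition \ref{prop:m=2alg_bd} is not proved in this article at all, but simply recorded from \cite[Proposition 3.1]{amplituhedron_adjoint}, exactly as Proposition \ref{prop:m=4_alg_boundary} is imported from \cite{amplituhedronbcfwtriangulation}. So the relevant question is whether your sketch would stand on its own. Its skeleton is sound: the Euclidean boundary of the full-dimensional regular semi-algebraic set $\aA_{n,k,2}$ is pure of codimension one, so its Zariski closure is a union of irreducible divisors; each locus $\{\langle YZ_iZ_{i+1}\rangle=0\}$ is an irreducible hyperplane section (a Chow/Schubert divisor), so it suffices to show that each of them meets $\partial\aA_{n,k,2}$ in dimension $2k-1$ and that no other divisor does. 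One cosmetic slip: the sign-flip characterization of \cite{parisi:meq2, lukowski2023meq2} does not require $\langle YZ_iZ_{i+1}\rangle>0$ for \emph{all} cyclic $i$; the wrap-around twistor carries a twist by $(-1)^{k}$ coming from the twisted cyclic action on $Z$. This does not affect the vanishing loci, but the statement of the inequality description should be corrected.

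The substantive issue is that the two steps you yourself identify as the crux --- (a) that non-adjacent twistor hyperplanes meet $\partial\aA_{n,k,2}$ only in codimension at least two, and (b) that a failure of the sign-flip count without an adjacent twistor vanishing forces at least two twistors to vanish --- are asserted rather than proved. Claim (b) in particular is not obvious: a single non-adjacent twistor crossing zero can change the number of sign flips in the sequence $(\langle YZ_iZ_j\rangle)_j$ by two, so ruling out such walls as facets is precisely the nontrivial content of the boundary stratification. You then resolve this by invoking the facet enumerations of \cite{parisi:meq2, lam_face_structure}, but those results essentially \emph{are} the proposition (they identify the codimension-one boundary strata as lying exactly in the adjacent-twistor hyperplanes), so your write-up is closer to a reduction-to-citation than to an independent proof. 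That is a legitimate position --- it is what the present paper does by citing \cite{amplituhedron_adjoint} --- but you should either state clearly that the proposition is being quoted from the literature, or actually carry out the case analysis in (a)/(b) (including the easy inclusion, where exhibiting a codimension-one positroid cell of $\gr(k,n)_{\geq 0}$ from \cite{postnikov} with full-dimensional image in the given hyperplane section still requires an argument that $\widetilde Z$ does not drop the dimension of that image).
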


We will now give a simple interpretation of the boundary components. For this, we need one more definition. Let $\Lambda$ be a linear subspace in $\PP^{k+1}$ of dimension $k-d$ and $X$ an irreducible projective variety of dimension $d$. Then, the \emph{Chow hypersurface} $\CH(X)$ of $X$ is 
    \begin{align*}
        \CH(X) = \overline{\{\Lambda\mid X\cap \Lambda\neq \varnothing\}}\subset\gr(k-d+1,k+2).
    \end{align*}

\begin{remark}
    One way of thinking about the algebraic boundary of $\aA_{n,k,2}$ is that it is characterized by the cycle of lines $\operatorname{span}_{\C}(Z_i,Z_{i+1})\subset \PP^{k+1}$ for $i\in [n]$, with indices read cyclically.
    That is, $\partial_a \aA_{n,k,2}$ consists of the Chow hypersurfaces of these lines. This will be important later in the proofs of Section \ref{sec:m=2}.
\end{remark}

Similarly, the algebraic boundary for $m=4$ was conjectured in \cite[Section 5]{arkani2018unwinding} and eventually proven in \cite[Corollary 8.8]{amplituhedronbcfwtriangulation} to be the following.
\begin{proposition}\label{prop:m=4_alg_boundary}
    For $m=4$ the algebraic boundary $\partial_a \aA_{n,k,4}$ of $\aA_{n,k,4}$, with $n \geq k+4$, is the union of the following
    hyperplane sections of $\gr(k,k+4)$:
    \[
    \langle YZ_i Z_{i+1} Z_j Z_{j+1}\rangle = 0,
    \]
    where $Y\in\gr(k,k+4)$ and $\{i,i+1\}\cap\{j,j+1\}=\varnothing$, with $i,j\in [n]$ read cyclically.
\end{proposition}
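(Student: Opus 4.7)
The plan is to establish the two inclusions $\partial_a \aA_{n,k,4}\supseteq \bigcup_{i,j}H_{ij}$ and $\partial_a \aA_{n,k,4}\subseteq \bigcup_{i,j}H_{ij}$ separately, where $H_{ij}$ denotes the hyperplane section of $\gr(k,k+4)$ cut out by $\langle YZ_iZ_{i+1}Z_jZ_{j+1}\rangle = 0$ and the pair $(i,j)$ runs over cyclic indices with $\{i,i+1\}\cap\{j,j+1\}=\varnothing$. Geometrically $H_{ij}$ is the Chow hypersurface of the $3$-plane $\operatorname{span}_{\C}(Z_i,Z_{i+1},Z_j,Z_{j+1})\subset\PP^{k+3}$, in direct analogy with the $m=2$ interpretation recorded in the remark after Proposition~\ref{prop:m=2alg_bd}.

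For the containment $\supseteq$, I would exhibit for each admissible pair $(i,j)$ a codimension-one positroid cell $S_{ij}\subset\gr(k,n)_{\geq 0}$ whose image $\widetilde{Z}(S_{ij})$ is Zariski dense in $H_{ij}$ and lies in the Euclidean boundary $\partial\aA_{n,k,4}$. Using Postnikov's parametrization of positroid cells (via plabic graphs or Le-diagrams), this becomes a direct matrix computation: one checks that setting the appropriate boundary Plücker coordinates of $\Lambda \in \gr(k,n)_{\geq 0}$ to zero forces the twistor $\langle \widetilde{Z}(\Lambda)Z_iZ_{i+1}Z_jZ_{j+1}\rangle$ to vanish, and a dimension count then confirms that the image has real dimension $4k-1$, hence is dense in $H_{ij}$.

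For the reverse containment $\subseteq$, I would invoke a BCFW triangulation of $\aA_{n,k,4}$, which expresses the amplituhedron as a union of homeomorphic images of $4k$-dimensional positroid cells under $\widetilde{Z}$. The Euclidean boundary of $\aA_{n,k,4}$ then lies in the union of images of codimension-one faces of these cells; two BCFW cells sharing an internal codimension-one face contribute cancelling boundary pieces, so only the ``outer'' faces survive. A careful enumeration of which faces of which BCFW cells survive should show that the surviving boundary facets are exactly the $H_{ij}$.

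The main obstacle is the cancellation bookkeeping in the triangulation step: one must track precisely which codimension-one faces of BCFW cells get identified under $\widetilde{Z}$ and verify that no additional hyperplane sections survive beyond the listed $H_{ij}$. This is genuinely hard, and ultimately rests on first knowing that BCFW cells do triangulate $\aA_{n,k,4}$, a conjecture only recently settled in \cite{amplituhedronbcfwtriangulation}. A more elementary route via an explicit adjoint computation, as employed for $k=m=2$ in \cite{amplituhedron_adjoint}, is not available here since no closed-form adjoint of $\aA_{n,k,4}$ is known for general $k$.
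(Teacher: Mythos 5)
The paper does not prove Proposition \ref{prop:m=4_alg_boundary} at all: it records the statement as a known result, conjectured in \cite[Section 5]{arkani2018unwinding} and proved in \cite[Corollary 8.8]{amplituhedronbcfwtriangulation}, so the only ``proof'' in the paper is that citation. Your geometric identification of each hypersurface $\langle YZ_iZ_{i+1}Z_jZ_{j+1}\rangle=0$ with the Chow hypersurface of the $3$-plane spanned by $Z_i,Z_{i+1},Z_j,Z_{j+1}$ is correct and matches exactly how these divisors $D_{ij}=\CH(V_{ij})$ are used later in Section \ref{sec:m=4}.

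As an argument, however, your proposal has genuine gaps, and where it is not gapped it reduces to the very citation the paper relies on. In the inclusion $\supseteq$, the two essential claims --- that the image of your chosen codimension-one positroid cell is Zariski dense in $H_{ij}$, and that this image lies in the Euclidean boundary of $\aA_{n,k,4}$ rather than in its interior --- are precisely the nontrivial points; the vanishing of one twistor on the image together with a dimension count does not rule out that the image is covered by images of full-dimensional cells, and excluding this requires the analysis of boundary strata carried out in \cite{amplituhedronbcfwtriangulation}. In the inclusion $\subseteq$, the ``cancellation bookkeeping'' of internal faces presupposes that the BCFW cells triangulate $\aA_{n,k,4}$ (injectivity on cells, disjoint interiors, union equal to the amplituhedron), which is the main theorem of \cite{amplituhedronbcfwtriangulation}, and the enumeration of which facets survive is essentially the content of its Corollary 8.8, which you do not carry out. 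So what you have written is a correct high-level description of the strategy of the external proof rather than a proof: none of the hard steps are closed, and if one is permitted to quote \cite{amplituhedronbcfwtriangulation}, the statement follows with no further work --- which is exactly what the paper does.
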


\subsection{A pinch of Positive Geometry}\label{sec:posGeom}

We recall the definition of \emph{positive geometries} in the sense of \cite{pos_geom_2017}, see also \cite{lam2024invitation}.
Let $X$ be a $d$-dimensional irreducible complex projective variety defined over $\mathbb{R}$. 
Let $X_{\geq 0}$ be a $d$-dimensional semi-algebraic subset of the real points $X(\mathbb{R})$ of $X$. 
We require that the interior $X_{>0}$ of $X_{\geq 0}$ (with respect to the analytic topology on $X(\mathbb{R})$) 
is an oriented manifold, such that its (analytic) closure is $X_{\geq 0}$. 
The algebraic boundary $\partial_a X_{\geq 0}$ is the Zariski closure in $X$ of $X_{\geq 0} \setminus X_{>0}$. 
The irreducible components of $\partial_a X_{\geq 0}$ are prime divisors $D_1,\ldots,D_r$ on~$X$. We define $(D_i)_{\geq 0}$ as the closure in $D_i(\mathbb{R})$ of the interior of $D_i \cap (X_{\geq 0}\setminus X_{>0})$.

\begin{definition}\label{def:pos_geom_lam}
The pair $(X, X_{\geq 0})$ is called a \emph{positive geometry} if there exists a unique rational $d$-form $\Omega(X_{\geq 0})$ on $X$, called the \emph{canonical form}, satisfying the following axioms.
\begin{itemize}
    \item If $d > 0$, the form $\Omega(X_{\geq 0})$ has poles only along $D_1,\ldots,D_r$. 
    \item For each component $D_i$ of the algebraic boundary, the pair $(D_i,(D_i)_{\geq 0})$ is a positive geometry; $\Omega(X_{\geq 0})$ has a simple pole along $D_i$ 
    and the Poincaré residue $\operatorname{Res}_{D_i} \Omega(X_{\geq 0})$ equals the canonical form of 
    $(D_i,(D_i)_{\geq 0})$. Here $D_{i,> 0}$ is an oriented manifold, 
    with orientation induced by that of $X_{>0}$. 
    \item If $d=0$, $X=X_{\geq 0}$ is a point and $\Omega(X_{\geq 0}) = \pm 1$. 
    The sign determines the orientation on this 0-dimensional manifold.
\end{itemize}
\end{definition}

\begin{remark} \label{rem:holomorph}
Note that if the variety $X$ has non-zero global holomorphic top-forms, the pair $(X, X_{\geq 0})$ cannot be a positive geometry: adding such a form to a canonical form would produce another canonical form, contradicting the uniqueness requirement.
Moreover, if the canonical form is not unique, then the difference between any two canonical forms is a global holomorphic top-form on $X$: both canonical forms are holomorphic on $X\setminus\partial_{a}X_{\geq 0}$, and their difference has zero residues along each component of $\partial_{a}X_{\geq 0}$.
\end{remark}

Lastly, the \emph{residual arrangement} $\mathcal{R}(X_{\geq0})\subset X$ of a semi-algebraic set $X_{\geq 0}\subset X$ is defined as the union of the irreducible components of the singular locus of its algebraic boundary $\partial_aX_{\geq0}$ which do not intersect the Euclidean boundary $\partial X_{\geq 0}$.
If there is a unique polynomial of lowest degree interpolating $\mathcal{R}(X_{\geq0})$, then we call it the \emph{adjoint} of $X_{\geq 0}$, denoted $\operatorname{adj}(X_{\geq 0})$.

\subsection{A hint of Hodge Theory}

In the recent paper \cite{brown-dupont} an alternative framework for positive geometries, based on mixed Hodge theory, was suggested. We briefly recall the essential notions.
Let $X$ be a smooth complex variety of dimension $n$.
In this work we exclusively work with singular cohomology groups of $X$ with rational coefficients, denoted by $H^k(X)$.
There is a natural decomposition of the cohomology groups on $X$, first identified by Hodge in \cite{Hodge}:
\begin{align}\label{eq:pure_HS_decomp}
    H^k(X)=\bigoplus_{p+q=k}H^k(X)^{p,q}=\bigoplus_{p+q=k}H^q(X,\Omega_X^p),
\end{align}
where $\Omega_X^p$ is the \emph{sheaf of algebraic $p$-forms} on $X$, which is defined by taking the $p$-th exterior power of the sheaf of regular functions $\mathcal{O}_X$ on $X$.
This is the \emph{Hodge decomposition}, and it gives $X$, or rather its cohomology groups, a \emph{pure Hodge structure of weight $k$}.

Singular varieties can be endowed with a \emph{mixed Hodge structure}, which roughly speaking allows for pure Hodge structures of different weights to be combined. We use the definition as in \cite[Definition 1.5]{brown-dupont}.
\begin{definition}
    A \emph{mixed Hodge structure} is the data of a finite-dimensional $\mathbb{Q}$-vector space $H$ together with
    \begin{itemize}
        \item an increasing filtration $W$ on $H$, called the \emph{weight filtration},
        \item a decreasing filtration $F$ on the complexification $H_{\mathbb{C}}$, called the \emph{Hodge filtration},
    \end{itemize}
    such that for every integer $w$, the filtration induced by $F$ on
    \[
    \operatorname{gr}^W_w H := W_w H / W_{w-1} H
    \]
    defines a pure Hodge structure of weight $w$.
    For $w=p+q$ record its Hodge filtrated parts by
    \[
    (\operatorname{gr}^W_w H)^{p,q}=\operatorname{gr}_F^p(\operatorname{gr}^W_w H)=\frac{F^p(\operatorname{gr}^W_w H)}{F^{p+1}(\operatorname{gr}^W_w H)}=F^p(\operatorname{gr}^W_w H)\cap \overline{F^q(\operatorname{gr}^W_w H)}.
    \]
    The corresponding \emph{Hodge numbers} are the complex dimensions
    \[
    h^{p,q} = \dim H^{p,q} = h^{q,p}, 
    \quad \text{where } H^{p,q} := \bigl(\operatorname{gr}^W_{p+q} H\bigr)^{p,q}.
    \]
\end{definition}
\begin{remark}
    In our setting, we will exclusively deal with mixed Hodge structures on the $k$-th cohomology group of a complex algebraic variety $X$. Then, for the Hodge numbers we explicitly write
    \begin{align}\label{eq:Hodge_numbers}
        h^{p,q}(H^k(X)) = \dim\left(\operatorname{gr}_F^p\bigl(\operatorname{gr}^W_{p+q} H^{k}(X)\bigr)\right).
    \end{align}
\end{remark}
Now, let $Y$ be a closed subset of an $n$-dimensional complex algebraic variety $X$, and let $H^k(X,Y)$ denote the $k$-th relative cohomology group.
The first central notion that allows us to use this framework to study positive geometries is the genus of the pair $(X,Y)$.
We define the \emph{genus} $g(X,Y)$ of the pair to be
\[
    g(X,Y) =\sum_{p>0}h^{p,0}(H^n(X,Y)).
\]
We write $g(X)=g(X,\varnothing)$ for the trivial pair $(X,\varnothing)$.
This definition is equivalent to writing $g(X,Y)=\sum_{p>0}h^{-p,0}(H_n(X,Y))$ essentially by Poincaré duality, see \cite[Equation (16)]{brown-dupont}.
Secondly, denote by $\Omega_{\operatorname{log}}^n(X\setminus Y)$ the space of  global holomorphic differential $n$-forms on $X\setminus Y$ with logarithmic poles along $Y$.
If $X\setminus Y$ is smooth and $g(X,Y)=0$ we have an surjective morphism of $\Q$-vector spaces
\begin{align}\label{eq:canForm_map}
    \operatorname{\Omega}:H_n(X,Y)\twoheadrightarrow\Omega_{\operatorname{log}}^n(X\setminus Y),\quad \sigma\mapsto \Omega(\sigma)=\omega_\sigma,
\end{align}
which associates to a $n$-cycle $\sigma$ in $X$ with boundary in $Y$, a logarithmic differential $n$-form $\Omega(\sigma)$ on $X\setminus Y$.
In this sense, one way to define a positive geometry is as follows, see \cite{telen2025positivegeometrypolytopespolypols}.
\begin{definition}\label{def:posgeom_BD}
    Let $X$ be an $n$-dimensional complex algebraic variety, and $Y$ a divisor, such that the complement $X\setminus Y$ is smooth.
    Further, let the pair $(X,Y)$ have vanishing genus, i.e. $g(X,Y)=0$, and non-vanishing combinatorial rank (see \cite[Definition 2.2]{brown-dupont}) $\crank(X,Y)>0$.
    Then, a \emph{positive geometry} is a relative homology class $\sigma \in H_n(X,Y)$. The \emph{canonical form} of $\sigma$ is the image $\Omega(\sigma)\in\Omega_{\operatorname{log}}^n(X\setminus Y)$ under the map in \eqref{eq:canForm_map}. 
\end{definition}

\begin{remark}
    In several examples, such as hyperplane arrangements, see \cite[Section 6]{brown-dupont}, this notion of canonical form coincides with that of Definition \ref{def:pos_geom_lam}.
    However, this definition is certainly more broad, as it does not have any recursive conditions, see \cite[Remark 2.4.3]{brown-dupont}.
    On the other hand, it is not immediately clear whether being a positive geometry in the sense of Definition \ref{def:pos_geom_lam} implies being a positive geometry in the sense of Definition \ref{def:posgeom_BD}.
    In Section \ref{sec:genus_one_PG} we show that this is not the case.
\end{remark}
    If a mixed Hodge structure $H$ satisfies $H^{p,q}=0$ for all $p\neq q$, then we say $H$ is of \emph{Tate type}.
    Let $a$ be an integer, then denote the \emph{twisted Hodge structure} by $H(-a)$, where $H(-a)^{p,q}:=H^{p+a,q+a}$.
    Additionally, we say a map $f:H_1\rightarrow H_2$ between two Hodge structures is a \emph{morphism} of Hodge structures if $f(F_p(H_1))\subset F_{p}H_2$ for all $p$ or equivalently $f(H_1^{p,q})\subset H_2^{p,q}$ for all $p$ and $q$.
\begin{remark}
    Some other sources choose to introduce morphisms between Hodge structures of type $(a,a)$ for an integer $a$, such that $f(F_p(H_1))\subset F_{p+a}H_2$ or equivalently $f(H_1^{p,q})\subset H_2^{p+a,q+a}$ for all $p$ and $q$.
    We choose to work only with morphisms of Hodge structures of type $(0,0)$, and add Tate twists to the Hodge structures if necessary. 
    For example, for the morphism of type $(a,a)$ from above, we write $f(H_1^{p,q})\subset H_2(-a)^{p,q}$ for all $p$ and $q$ and simply say $f$ is a morphism of Hodge structures.
\end{remark}

Throughout the rest of the article, $Y$ will usually be a reducible subvariety of $X$.
Then, in order to understand the Hodge structure on the relative (co-)homology groups of the pair $(X,Y)$, it will be important to understand how intersections of the irreducible components of $Y$ behave homologically.
To that end, we recall a few standard tools from algebraic topology.
Firstly, let $A,B\subset X$ be subvarieties of the complex algebraic variety $X$ such that $X=A\cup B$, then we have the \emph{Mayer--Vietoris} long exact sequence in cohomology:
\begin{align}\label{eq:MV}
    \cdots \to H^{k-1}(A) \oplus H^{k-1}(B)\to H^{k-1}(A\cap B) \to H^k(X) \to H^k(A) \oplus H^k(B) \to \cdots.
\end{align}
Additionally, let $Y$ be a closed subset of $X$ of codimension one. Then we get the long exact sequence in relative cohomology for the pair $(X,Y)$:
\begin{align}\label{eq:relLES}
    \cdots \to H^{k-1}(X) \to H^{k-1}(Y) \to H^k(X,Y) \to H^k(X) \to \cdots.
\end{align}
Both sequences are long exact sequences of mixed Hodge structures, and all of the maps appearing in them are morphisms of Hodge structures, see \cite[Appendix C.23/30]{Dimca_sing_hypersurfaces} and \cite[Proposition 5.46]{MHS}, respectively.
Lastly, we recall the Lefschetz hypersurface theorem. 
\begin{theorem}[{\cite[Theorem 1.23]{Voisin_2003}}]\label{thm:Lefschetz_Voisin}
    Let $X$ be a complex algebriac variety of dimension $n$, and let $Y$ be a hypersurface in $X$, such that the complement $X\setminus Y$ is smooth. Then, we get an isomorphism
    \begin{align}\label{eq:Lefschetz}
        H^k(X)\cong H^{k}(Y)\quad \text{ for all } k\leq n-2,
    \end{align}
    and an injection $H^{n-1}(X)\hookrightarrow H^{n-1}(Y)$.
\end{theorem}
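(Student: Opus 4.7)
The plan is to derive both conclusions from the long exact sequence \eqref{eq:relLES} of the pair $(X,Y)$,
\[
\cdots\to H^{k-1}(X)\to H^{k-1}(Y)\to H^k(X,Y)\to H^k(X)\to H^k(Y)\to\cdots,
\]
by proving the single vanishing statement $H^k(X,Y)=0$ for all $k\le n-1$. Granted this, the segment $H^k(X,Y)\to H^k(X)\to H^k(Y)\to H^{k+1}(X,Y)$ is sandwiched between zero terms whenever $k\le n-2$, yielding the stated isomorphism, while the sole vanishing of $H^{n-1}(X,Y)$ forces $H^{n-1}(X)\to H^{n-1}(Y)$ to have trivial kernel, producing the claimed injection.

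To establish the vanishing, I would transfer the relative cohomology onto the smooth complement $U:=X\setminus Y$, which is a complex manifold of dimension $n$. In the projective setting (where $X$ is embedded in some $\PP^N$ and $Y$ is cut out by a hyperplane section, making $U$ affine), excision identifies
\[
H^k(X,Y)\cong H^k_c(U),
\]
and Poincar\'e duality on the oriented real $2n$-manifold $U$ yields
\[
H^k_c(U)\cong H^{2n-k}(U)^\vee.
\]
Thus the vanishing reduces, for $k$ in the range $0\le k\le n-1$, to the statement that $H^j(U)=0$ for every $j\ge n+1$.

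This last vanishing is the affine Lefschetz / Artin--Andreotti--Frankel theorem: an affine variety of complex dimension $n$ is Stein and has the homotopy type of a CW complex of real dimension at most $n$, so its singular cohomology vanishes above degree $n$. One proves this either by Morse theory applied to a strictly plurisubharmonic exhaustion function on the Stein manifold $U$ (Andreotti--Frankel), or sheaf-theoretically via Artin's bound on the cohomological dimension of an affine variety with constant constructible coefficients. Assembling the three ingredients---the long exact sequence, excision combined with Poincar\'e duality, and the affine vanishing---completes the argument.

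The main obstacle is that the statement as written is deceptively clean: it does not mention projectivity of $X$ or ampleness of $Y$, yet the above strategy relies on $U$ being affine (or at least Stein). I would therefore either appeal to \cite{Voisin_2003} directly (where the stated hypotheses are in force) or verify that the concrete ambient varieties used later in the paper---essentially Grassmannians $\gr(k,k+m)$ in their Pl\"ucker embeddings with $Y$ a union of hyperplane sections---land in the projective-plus-hyperplane-complement regime where the affine vanishing applies. In a self-contained write-up, replacing Andreotti--Frankel by the sheaf-theoretic Artin vanishing would be the cleanest way to proceed, since that formulation is already compatible with the mixed Hodge theoretic language used throughout the paper.
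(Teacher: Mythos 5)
This statement is quoted from \cite[Theorem 1.23]{Voisin_2003}; the paper supplies no proof of its own, so there is nothing internal to compare against. Your argument---the long exact sequence of the pair, the identification $H^k(X,Y)\cong H^k_c(X\setminus Y)$, Poincar\'e duality on the smooth oriented complement, and Andreotti--Frankel/Artin vanishing above the middle dimension---is exactly the standard proof of the cited theorem, and with $\Q$-coefficients each step you outline is correct. You are also right about the one genuine issue: as restated in the paper the hypotheses are too weak, since the proof (and Voisin's actual statement) requires $X$ projective with $U=X\setminus Y$ affine, not merely smooth, and without such a positivity assumption on $Y$ the conclusion can fail. This is harmless for the paper's purposes, because wherever the theorem is invoked (e.g.\ in the base case of Lemma \ref{lem:CI_Hodge}) $X$ is a Grassmannian in its Pl\"ucker embedding and $Y$ is a hyperplane section, so the complement is smooth and affine and your vanishing argument applies verbatim.
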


\section{Amplituhedra that make genus zero pairs}\label{sec:vanishing_genus}

In this section we consider specific choices of the parameters $n$, $k$, and $m$ for which the amplituhedron $\aA_{n,k,m}$ is known to be a positive geometry in the sense of \cite{pos_geom_2017}. We show that for these choices the pair $(\mathrm{Gr}(k,k+m),\partial_a \aA_{n,k,m})$ has genus zero, and thus fits nicely into the framework proposed in \cite{brown-dupont}.
We collect the results in the following theorem. 

\begin{theorem}\label{thm:amplituhedra_genus_zero}
    Let $Z$ be a generic $n\times (k+m)$ matrix with positive maximal minors.
    Then, for any choice of integers $k$, $m$, and $n\geq k+m$, satisfying either $k=1$, $k+m=n$, or $k=m=2$, the boundary divisor $\partial_a\aA_{n,k,m}\subset \gr(k,k+m)$ of the amplituhedron makes a genus zero pair with the ambient Grassmannian.
    That is, 
    \[
    g(\gr(k,k+m),\partial_a\aA_{n,k,m})=0
    \]
    for the choices of $k,n$ and $m$ as above.
\end{theorem}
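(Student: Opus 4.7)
The plan is to reduce the vanishing of $g(\gr(k,k+m),\partial_a\aA_{n,k,m})$ to showing that $Y:=\partial_a\aA_{n,k,m}$ has cohomology of Tate type, and then to verify this separately in each of the three listed cases by analysing the intersections of the irreducible components of $Y$. Setting $d:=\dim\gr(k,k+m)=km$, the long exact sequence \eqref{eq:relLES} applied to $(\gr(k,k+m),Y)$ gives
\[
H^{d-1}(\gr(k,k+m))\to H^{d-1}(Y)\to H^{d}(\gr(k,k+m),Y)\to H^{d}(\gr(k,k+m))
\]
in the category of mixed Hodge structures. The Grassmannian admits a Schubert cell decomposition with only even-dimensional complex affine cells, so its cohomology is of Tate type; in particular $h^{p,0}(H^r(\gr(k,k+m)))=0$ for all $p>0$. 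Because the functor $\mathrm{gr}_F^p\mathrm{gr}^W_p$ is exact on the abelian category of mixed Hodge structures, the above display yields $h^{p,0}(H^{d}(\gr(k,k+m),Y))\le h^{p,0}(H^{d-1}(Y))$ for every $p>0$, so it suffices to show that $H^*(Y)$ is of Tate type.

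Next, decompose $Y=D_1\cup\cdots\cup D_N$ into its irreducible components: for $k=1$ these are the facet hyperplanes of the cyclic polytope $\aA_{n,1,m}$, and for $k+m=n$ or $k=m=2$ they are the hyperplane sections of Propositions \ref{prop:m=2alg_bd} and \ref{prop:m=4_alg_boundary}. Iterating \eqref{eq:MV} produces the \v{C}ech-to-singular spectral sequence of mixed Hodge structures with $E_1^{p,q}=\bigoplus_{|I|=p+1}H^q(D_I)\Rightarrow H^{p+q}(Y)$, where $D_I:=\bigcap_{i\in I}D_i$. Since Tate type is closed under subquotients and extensions of mixed Hodge structures, it is enough to show that every intersection $D_I$ has cohomology of Tate type.

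I would then check this intersection claim case by case. For $k=1$ one has $\gr(1,m+1)=\PP^m$, the $D_i$ are hyperplanes, and the $D_I$ are linear subspaces $\PP^j$, manifestly Tate. For $k+m=n$, the map $\wt{Z}$ is an isomorphism of Grassmannians and each $D_i$ is the $\wt{Z}$-image of a cyclically-shifted Schubert divisor of $\gr(k,n)$; generic positivity of $Z$ together with Kleiman transversality implies that every $D_I$ is a Richardson (positroid) subvariety of $\gr(k,n)$, and such varieties admit affine pavings and hence are of Tate type. For $k=m=2$, each $D_i$ is the Chow hypersurface in $\gr(2,4)$ of the line $L_i=\mathrm{span}(Z_i,Z_{i+1})\subset\PP^3$, and $D_I$ parametrises lines in $\PP^3$ meeting every $L_i$ with $i\in I$; although consecutive lines $L_i,L_{i+1}$ share the point $Z_{i+1}$ and cause $D_i\cap D_{i+1}$ to decompose into two $\PP^2$'s glued along a $\PP^1$, every such $D_I$ is built from projective spaces and smooth quadrics $\PP^1\times\PP^1$, all of Tate type.

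The main obstacle I expect is the identification of the intersections $D_I$ for $k+m=n$, particularly when $m=4$ and the combinatorics becomes intricate: one must verify that the specific cyclically-indexed divisors arising from a positive $Z$ intersect in the same Richardson-type pattern as generic Schubert divisors, so that the affine-paving results for positroid varieties apply on the nose. Modulo this transversality check, the remainder of the argument is a mechanical unwinding of the \v{C}ech spectral sequence together with the fact that subquotients and extensions of Tate type mixed Hodge structures are again Tate.
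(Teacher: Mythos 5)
Your overall strategy is sound and genuinely different from the paper's: you reduce $g(\gr(k,k+m),\partial_a\aA_{n,k,m})=0$ to mixed-Tate-ness of $H^{*}(\partial_a\aA_{n,k,m})$ via the relative long exact sequence and the Mayer--Vietoris (\v{C}ech) spectral sequence, whereas the paper never computes the cohomology of the boundary at all: for $k+m=n$ it observes that $\gr(k,n)\setminus\partial_a\gr(k,n)_{\geq 0}$ is the smooth top-dimensional open positroid variety $\mathring\Pi$, whose cohomology is pure of Tate type by \cite{galashin_positroid_hodge}, and concludes by Poincar\'e duality (Proposition \ref{prop:nonnegGrass_genus_zero}); for $k=m=2$ it passes to the ambient $\PP^5$ and uses the genus calculus of \cite{brown-dupont} (Theorem \ref{thm:m=2_k=2_has_genus_zero}). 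Your reduction steps (exactness of $\operatorname{gr}_F\operatorname{gr}^W$, Tate type closed under subquotients and extensions) are fine, so everything hinges on your claim that all intersections $D_I$ are of Tate type.

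That is where there is a genuine gap, most seriously in the case $k+m=n$. There the $D_i$ are the cyclically shifted Schubert divisors $\{p_{i,i+1,\ldots,i+k-1}=0\}$ transported by the single global automorphism $\widetilde Z$; this configuration is rigid, so Kleiman transversality is not available — you cannot move the divisors into general position by choosing $Z$ generically, and in fact consecutive divisors meet badly. Concretely, in $\gr(2,4)$ the intersection $\{p_{12}=0\}\cap\{p_{23}=0\}$ is the set of lines meeting the two intersecting lines $\operatorname{span}(e_3,e_4)$ and $\operatorname{span}(e_1,e_4)$, which is a union of two Schubert planes glued along a $\PP^1$; it is reducible, hence not a positroid (in particular not a Richardson) variety. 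So the assertion ``every $D_I$ is a Richardson (positroid) subvariety'' is false as stated, and the further assertion that closed positroid/Richardson varieties admit affine pavings is not a result you can simply cite — what is actually available is mixed-Tate-ness of \emph{open} positroid varieties (the Galashin--Lam result the paper uses, or Deodhar-type decompositions into products of tori and affine spaces), from which Tate-ness of closed positroid varieties and of the unions $D_I$ would still have to be extracted by stratification and further Mayer--Vietoris arguments on their components. The same issue recurs, more mildly, for $k=m=2$: several $D_I$ are reducible, so ``built from projective spaces and quadrics'' needs the component-by-component check you only gesture at. To repair the argument you either have to carry out this analysis of the cyclic Schubert/Chow intersections honestly, or switch to the paper's route through the smooth complement and Poincar\'e duality, which avoids the boundary cohomology entirely.
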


We cover the cases of the theorem above individually.
The first case is the non-negative Grassmannian, corresponding to the choice $k+m=n$.

\begin{proposition}\label{prop:nonnegGrass_genus_zero}
    The non-negative Grassmannian $\gr(k,n)_{\geq 0}$ gives rise to a genus zero pair. That is, $g(\gr(k,n),\partial_a\gr(k,n)_{\geq 0})=0$.
\end{proposition}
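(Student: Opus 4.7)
The aim is to prove that $H^N(\gr(k,n),\, \partial_a\gr(k,n)_{\geq 0})$ is of Tate type, where $N = k(n-k) = \dim_{\C} \gr(k,n)$; this forces $h^{p,0} = 0$ for all $p>0$ and hence $g=0$. The first step is to identify the boundary. By Postnikov's description of the totally nonnegative Grassmannian \cite{postnikov}, the facets of $\gr(k,n)_{\geq 0}$ correspond bijectively to the $n$ cyclic (``frozen'') Pl\"ucker coordinates, so
\[
\partial_a \gr(k,n)_{\geq 0} \;=\; D_1 \cup \cdots \cup D_n, \qquad D_i \;=\; \{p_{i,i+1,\ldots,i+k-1} = 0\} \cap \gr(k,n),
\]
with indices read cyclically. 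Each $D_i$ is an irreducible cyclic Schubert divisor, i.e.\ an irreducible hyperplane section of $\gr(k,n)$ in its Pl\"ucker embedding.

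The second step is to plug $(X, Y) = (\gr(k,n),\, \bigcup_i D_i)$ into the long exact sequence \eqref{eq:relLES} of mixed Hodge structures. Since the category of mixed Hodge structures is abelian and the Tate-type condition is closed under subquotients and extensions, it suffices to show that $H^{N-1}(Y)$ and $H^N(X)$ are both of Tate type. For the Grassmannian $X$ this is standard: the Schubert cell decomposition gives cohomology concentrated in bidegrees $(p,p)$. Hence the problem reduces to showing that $H^*(Y)$ is of Tate type.

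For $Y$ I would run the Mayer--Vietoris spectral sequence associated with the cover $Y = \bigcup_i D_i$; this is a spectral sequence of mixed Hodge structures whose $E_1$ page is a direct sum of the cohomologies of the intersections $D_{i_1} \cap \cdots \cap D_{i_r}$. Every such intersection is a union of closed positroid subvarieties of $\gr(k,n)$, and by the work of Knutson--Lam--Speyer each positroid variety admits a stratification by locally closed pieces isomorphic to products $(\C^*)^a \times \C^b$. The cohomology (and compactly supported cohomology) of such a piece is manifestly of Tate type, so inductively peeling strata off via excision shows each intersection has Tate cohomology. Consequently the $E_1$ page is of Tate type, and so is its abutment $H^*(Y)$.

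The main obstacle is this last step: justifying that intersections of the cyclic Pl\"ucker divisors really do decompose into positroid varieties admitting Tate-type pavings. Rather than verifying this combinatorially by hand, I would invoke the positroid stratification results of Knutson--Lam--Speyer. Once the Tate property of $H^*(Y)$ is in place, the conclusion $g(\gr(k,n), \partial_a\gr(k,n)_{\geq 0}) = 0$ is purely formal diagram chasing in the long exact sequence.
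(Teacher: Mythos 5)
Your overall architecture (identify $\partial_a\gr(k,n)_{\geq 0}$ with the union of the $n$ cyclic Schubert divisors $D_i$, run the relative long exact sequence, reduce via Mayer--Vietoris to Tate-ness of the cohomology of the intersections, and note that these intersections are set-theoretically unions of closed positroid varieties) is sound, and it correctly isolates the real content: Tate-type cohomology of positroid varieties. The gap is in the justification of exactly that step. Knutson--Lam--Speyer do not prove that positroid varieties admit a \emph{stratification} by pieces isomorphic to $(\C^*)^a\times\C^b$. What exists is the Deodhar decomposition (Deodhar, Marsh--Rietsch; in the Grassmannian/positroid setting Talaska--Williams), whose pieces are indeed of this form, but it is a decomposition rather than a stratification: the closure of a Deodhar component need not be a union of components, and no filtration by closed subsets with these pieces as successive differences is available in general. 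Without such a filtration your ``peel off strata by excision'' induction does not run. A decomposition into mixed Tate pieces only controls the class in the Grothendieck ring of varieties (equivalently the E-polynomial); it does not imply that each individual cohomology group carries a Tate-type mixed Hodge structure. So the step you yourself flag as the main obstacle is genuinely unproved as written.

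The gap can be closed by swapping the Deodhar input for the honest positroid stratification, whose strata closures \emph{are} unions of strata, combined with the theorem of Galashin--Lam \cite[Corollary 4.13]{galashin_positroid_hodge} that open positroid varieties have cohomology of Tate type. Induction over strata using the long exact sequence of compactly supported cohomology then gives Tate-ness for any closed union of positroid strata, in particular for each intersection $D_{i_1}\cap\cdots\cap D_{i_r}$ and indeed for $\partial_a\gr(k,n)_{\geq 0}$ itself, after which your diagram chase in \eqref{eq:relLES} finishes the argument. Be aware that the paper reaches the conclusion from the same input by a shorter route that bypasses the boundary entirely: the complement $\gr(k,n)\setminus\partial_a\gr(k,n)_{\geq 0}$ is the top-dimensional open positroid variety $\mathring\Pi$, which is smooth, so Poincar\'e--Lefschetz duality \cite[Equation (16)]{brown-dupont} gives $H_{N}(\gr(k,n),\partial_a\gr(k,n)_{\geq 0})\cong H^{N}(\mathring\Pi)(N)$ with $N=k(n-k)$, and the latter is of Tate type by Galashin--Lam, so all $h^{p,0}$ with $p>0$ vanish. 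Your approach, once repaired, is a valid alternative, but it requires Tate-ness for all (unions of closed) positroid varieties rather than for the single open stratum the paper uses.
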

\begin{proof}
    By \cite[Corollary 4.13]{galashin_positroid_hodge} the cohomology groups of open positroid varieties in $\gr(k,n)_{\geq 0}$ are pure of Tate type.
    In particular, this is true for the top-dimensional open positroid variety $\mathring \Pi$. Its intersection with the non-negative Grassmannian $\mathring\Pi\cap \gr(k,n)_{\geq 0}$, has an algebraic boundary coinciding with $\partial_a\gr(k,n)_{\geq 0}$.
    Therefore, $\mathring \Pi=\gr(k,n)\setminus\partial_a\gr(k,n)_{\geq 0} $.
    Then, since $\mathring \Pi$ is smooth, it follows from Poincaré duality in \cite[Equation (16)]{brown-dupont} that 
    \[
    H_n(\gr(k,n),\partial_a\gr(k,n)_{\geq 0})\cong H^n(\gr(k,n)\setminus\partial_a\gr(k,n)_{\geq 0})(n)=H^n(\mathring\Pi)(n).
    \]
    Since $H^n(\mathring \Pi)$ has pure Hodge structure of Tate type and weight $2n$, the relative homology group $H_n(\gr(k,n),\partial_a\gr(k,n)_{\geq 0})$ must have a pure Hodge structure of weight $0$ and Tate type, in particular implying $g(\gr(k,n)\partial_a\gr(k,n)_{\geq 0})=0$.
\end{proof}

The second case is the amplituhedron $\aA_{n,2,2}$.
\begin{theorem}\label{thm:m=2_k=2_has_genus_zero}
    The amplituhedron $\aA_{n,2,2}\subset \gr(2,4)$ gives rise to a genus zero pair, i.e. $g(\gr(2,4),\partial_a\aA_{n,2,2})=0$.
\end{theorem}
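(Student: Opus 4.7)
The strategy is to show that the mixed Hodge structure on $H^4(\gr(2,4), Y)$ is of pure Tate type, where $Y := \partial_a \aA_{n,2,2}$; since a Tate-type mixed Hodge structure has $h^{p,q} = 0$ whenever $p \neq q$, this immediately yields $h^{p,0}(H^4(\gr(2,4), Y)) = 0$ for every $p > 0$ and hence $g(\gr(2,4), Y) = 0$. By Proposition \ref{prop:m=2alg_bd}, $Y = D_1 \cup \cdots \cup D_n$ where each $D_i := \{\langle Y Z_i Z_{i+1}\rangle = 0\}$ is the Schubert divisor parametrizing lines in $\PP^3$ meeting $\ell_i := \overline{Z_i Z_{i+1}}$. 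The long exact sequence \eqref{eq:relLES} of the pair $(\gr(2,4), Y)$ in degree four reads
\[
H^3(\gr(2,4)) \to H^3(Y) \to H^4(\gr(2,4), Y) \to H^4(\gr(2,4)) \to H^4(Y).
\]
Since $\gr(2,4)$ admits a Schubert cell decomposition with only even-dimensional cells, its cohomology is concentrated in even degrees and of pure Tate type, so the first term vanishes and the fourth is Tate. Consequently $H^4(\gr(2,4), Y)$ fits into a short exact sequence of mixed Hodge structures $0 \to H^3(Y) \to H^4(\gr(2,4), Y) \to K \to 0$ where $K \subseteq H^4(\gr(2,4))$ is Tate, and since Tate type is preserved under extensions in the category of mixed Hodge structures, the problem reduces to proving that $H^3(Y)$ is of Tate type.

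For this I would apply the Mayer--Vietoris spectral sequence of mixed Hodge structures for the closed cover $\{D_i\}_{i=1}^n$, with $E_1^{p,q} = \bigoplus_{|I| = p+1} H^q(D_I)$ abutting to $H^{p+q}(Y)$, where $D_I := \bigcap_{i \in I} D_i$. The same preservation principle reduces the task to showing that every intersection $D_I$ has cohomology of Tate type. Geometrically, $D_I$ is the variety of lines in $\PP^3$ meeting every $\ell_i$ with $i \in I$, and its structure is governed by the cyclic incidence pattern in which $\ell_i$ and $\ell_{i+1}$ share the vertex $Z_{i+1}$ while, for generic $Z$, any two non-adjacent lines $\ell_i, \ell_j$ are skew.

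The final step is the case analysis. For $I$ with pairwise non-adjacent indices, classical enumerative geometry identifies $D_I$ with $\PP^1 \times \PP^1$, a $\PP^1$, a finite set of points, or the empty set depending on $|I|$, all visibly of Tate type. When $I$ contains an adjacent pair $i, i+1$, the subintersection $D_i \cap D_{i+1}$ already splits as $\alpha_{Z_{i+1}} \cup \beta_{P_i}$, the union of the $\alpha$-plane of lines through $Z_{i+1}$ and the $\beta$-plane of lines in $P_i := \overline{Z_i Z_{i+1} Z_{i+2}}$, two copies of $\PP^2$ meeting along a $\PP^1$; imposing the remaining incidences from the other $\ell_j$ cuts these planes into Schubert-type loci of $\PP^2$ (pencils through a point, finite sets, and the like), and an inner Mayer--Vietoris then gives Tate cohomology for $D_I$. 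The main obstacle is the combinatorial bookkeeping across all patterns of adjacencies in $I$, especially when $I$ contains three or more consecutive indices; however, the analysis terminates in finitely many cases because $D_I$ is empty once $|I|$ is large enough to overdetermine the incidence conditions in $\PP^3$, and every component that appears along the way is a rational Schubert-type subvariety of a projective space, hence of Tate type.
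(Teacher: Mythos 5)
Your argument is essentially correct, but it takes a genuinely different route from the paper. The paper never works intrinsically on $\gr(2,4)$: it uses the special feature of $k=m=2$ that $\gr(2,4)$ is a quadric hypersurface in $\PP^5$ and that the boundary components extend to hyperplanes $D_i\subset\PP^5$, and then invokes the general machinery of Brown--Dupont for hypersurface arrangements in projective space (their Corollary 3.12 to write $g(\gr(2,4),\partial_a\aA_{n,2,2})=g(\PP^5,D\cup\gr(2,4))-g(\PP^5,D)$, plus their Corollary 3.11 and Theorem 3.26 to see that both terms vanish). Your proof instead stays inside $\gr(2,4)$, combines the long exact sequence of the pair with the Mayer--Vietoris spectral sequence for the closed cover by the Schubert divisors $D_i$, and reduces everything to classical line geometry in $\PP^3$. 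What your route buys is a stronger conclusion (the full mixed Hodge structure on $H^4(\gr(2,4),\partial_a\aA_{n,2,2})$ is of Tate type, not merely $h^{p,0}=0$) and a clear view of why the statement fails for $k\geq 3$: there the deep intersections of boundary divisors contain positive-genus curves, exactly the mechanism exploited in Section \ref{sec:m=2}. What the paper's route buys is brevity: no case analysis over adjacency patterns is needed, at the price of using results specific to arrangements of hypersurfaces in $\PP^n$.

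Two points in your final step should be tightened. First, ``rational Schubert-type subvariety, hence of Tate type'' is not a valid general principle (a blow-up of $\PP^3$ along a curve of positive genus is rational with $h^{1,2}>0$); what actually closes the argument is that every component of every $D_I$ occurring here is a point, a $\PP^1$ (a pencil or a ruling), a $\PP^2$ ($\alpha$- or $\beta$-plane), or $\PP^1\times\PP^1$, each of which is visibly Tate, so unions and intersections are handled by the same spectral-sequence/extension reasoning. Second, $D_I$ need not be empty for large $|I|$ when $I$ contains long runs of consecutive indices: for instance all lines contained in the plane spanned by $Z_i,Z_{i+1},Z_{i+2}$ meet both $\ell_i$ and $\ell_{i+1}$ at once, so conditions become dependent; this does not hurt you, since the resulting loci are again of the types just listed, but the termination argument should be phrased via this classification rather than via emptiness. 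With these repairs the case analysis is finite and routine, and your proof is complete; it is also consistent with Lemma \ref{lem:genus_residual_curve}, which confirms that for $k=2$ the triple-intersection curves have genus zero.
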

\begin{proof}
    The Grassmannian $\gr(2,4)$ is a smooth quadratic hypersurface in its Pl\"ucker embedding into $\PP^5$.
    Moreover, the irreducible components $\mathcal D_i$ for $i\in [n]$ of the algebraic boundary $\mathcal D=\partial_a\aA_{n,2,2}=\bigcup_{i=1}^n\mathcal D_i\subset \gr(2,4)$ can be extended to hyperplanes $D_i$ in the ambient $\PP^5$.
    We also set $D=\bigcup_{i=1}^nD_i$ to be the union of these hyperplanes.
    Now, since the complement $\PP^5\setminus D$ is smooth and affine, we get 
    \begin{align}\label{eq:genera_Ranestad_amplit}
    g(\gr(2,4), \mathcal{D}) = g(\PP^5,D\cup\gr(2,4))-g(\PP^5,D),
    \end{align}
    by \cite[Corollary 3.12]{brown-dupont}.
    Moreover, since $H^{4}(\PP^5)^{p,0}=0$ for all $p\geq 1$, \cite[Corollary 3.11]{brown-dupont} implies that
    \begin{align}\label{eq:genera_Ranestad_amplit_help_1}
    g(\PP^5,D)=g(\PP^5)+g(D),
    \end{align}
    and
    \begin{align}\label{eq:genera_Ranestad_amplit_help_2}
        g(\PP^5,D\cup\gr(2,4))=g(\PP^5)+g(D\cup\gr(2,4)).
    \end{align}
    Using \cite[Theorem 3.26]{brown-dupont} for $g(D)$ and $g(D\cup\gr(2,4))$, as well as the fact that $h^{p,0}(H^5(\PP^5))=0$, we see that both genera vanish in the right-hand sides of \eqref{eq:genera_Ranestad_amplit_help_1} and \eqref{eq:genera_Ranestad_amplit_help_2}.
    Therefore, we obtain $g(\gr(2,4), \mathcal D)=0$ from \eqref{eq:genera_Ranestad_amplit}, as had to be shown.
\end{proof}

We collect these statments to prove the main theorem of this section.
\begin{proof}[Proof of Theorem \ref{thm:amplituhedra_genus_zero}]
    For $k=1$, the amplituhedron $\aA_{n,1,m}\subset \gr(1,k+m)\cong \PP^{k+m-1}$ is a cyclic polytope, see \cite[Theorem 1]{lam2024invitation}. In this case the claim follows from \cite[Section 6]{brown-dupont}.
    For the case $k+m=n$, the map $\tilde{Z}:\gr(k,n)_{\geq 0}\rightarrow \gr(k,n)$ is a linear isomorphism and thus the amplituhedron is isomorphic to the non-negative Grassmannian $\gr(k,n)_{\geq 0}$.
    Therefore, the claim is Proposition \ref{prop:nonnegGrass_genus_zero}.
    Lastly, the case of $k=m=2$ is covered in Theorem \ref{thm:m=2_k=2_has_genus_zero}.
\end{proof}

\section{The $m=2$ amplituhedra $\aA_{n,k,2}$ make positive genus pairs} \label{sec:m=2}

Let $m=2$, $k\geq 3$ and $n\geq2(2k-1)$.
The main goal of this section is to prove that the algebraic boundary $\partial_a\aA_{n,k,2}$ and the Grassmannian $\grk$ form a positive genus pair.
To that end, let the $n\times (k+2)$ matrix $Z$ with positive maximal minors, which defines the amplituhedron $\aA_{n,k,2}=\aA_{n,k,2}(Z)$, be generic.
Here, generic means we pick $Z$ in the open subset of matrices of this size, such that Kleiman's transversality theorem \cite[Theorem 1.7]{3264andallthat} applies in the situations appearing later in this section, see Lemmas \ref{lem:Kleimann} and \ref{lem:curve_smooth_irreducible}.
The main result of this section is then the following. 

\begin{theorem}\label{thm:pos_genus}
    Let $n$ and $k$ be positive integers such that $n\geq 2(2k-1)$. Further let $Z$ be generic and have positive maximal minors, 
    and $\partial_a\aA_{n,k,2}(Z)\subset\gr(k,k+2)$ be the algebraic boundary of the amplituhedron.
    Then, the genus of the pair $(\gr(k,k+2),\partial_a\aA_{n,k,2}(Z))$ is at least $1+\frac{k-3}{2}C_k$, where $C_k$ is the $k$-th Catalan number. In particular, for $k\geq 3$ it is positive. 
\end{theorem}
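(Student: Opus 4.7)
The plan is to bound the genus from below by exhibiting a curve inside the boundary $Y := \partial_a\aA_{n,k,2}(Z)$ whose geometric genus already equals $1 + \frac{k-3}{2}C_k$, and then to show that its holomorphic $1$-forms survive as $(1,0)$-classes in $H^{2k-1}(Y)$. Throughout write $X = \gr(k,k+2)$, so $\dim X = 2k$, $Y_i = \CH(\ell_i)$ with $\ell_i = \overline{Z_iZ_{i+1}}$ (Proposition~\ref{prop:m=2alg_bd}), and $Y_I = \bigcap_{i\in I} Y_i$. Because Grassmannians have vanishing odd cohomology and pure Tate even cohomology, the long exact sequence of mixed Hodge structures~\eqref{eq:relLES} collapses to $h^{p,0}(H^{2k}(X,Y)) = h^{p,0}(H^{2k-1}(Y))$ for every $p > 0$; it therefore suffices to lower-bound $h^{1,0}(H^{2k-1}(Y))$.

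Since $n \geq 2(2k-1)$, the subset $I_0 = \{1,3,5,\ldots,4k-3\}\subset [n]$ has cardinality $2k-1$ and no two cyclically consecutive elements, so the lines $\{\ell_i : i\in I_0\}$ are pairwise disjoint. For generic $Z$ (Lemma~\ref{lem:curve_smooth_irreducible}), Kleiman transversality makes $Y_{I_0}$ a smooth irreducible curve, realized as a complete intersection in $X$ of $2k-1$ divisors of Schubert class $\sigma_1 := [\CH(\ell)]$. Using $K_X = -(k+2)\sigma_1$, the Catalan formula $\sigma_1^{2k} = C_k$ from~\eqref{eq:Gr2_deg}, and adjunction,
\[
2g(Y_{I_0}) - 2 = \bigl(K_X + (2k-1)\sigma_1\bigr)\cdot\sigma_1^{2k-1} = (k-3)\,C_k,
\]
so $g(Y_{I_0}) = 1 + \tfrac{k-3}{2}C_k$.

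To push the $g(Y_{I_0})$ holomorphic $1$-forms of $Y_{I_0}$ into $H^{2k-1}(Y)$, I use the Mayer--Vietoris spectral sequence of mixed Hodge structures $E_1^{p,q} = \bigoplus_{|I| = p+1} H^q(Y_I) \Rightarrow H^{p+q}(Y)$ and focus on the summand $\mathbb{H} := H^1(Y_{I_0})\subset E_1^{2k-2,1}$, which is pure of weight $1$ with $(1,0)$-piece of dimension $g(Y_{I_0})$. Incoming $d_1$ into $\mathbb{H}$ only receives \v{C}ech-restrictions from $H^1(Y_J)$ with $J\subset I_0$ and $|J| = 2k-2$; every such $J$ inherits the no-consecutive property of $I_0$, so $Y_J$ is a smooth complete-intersection surface in $X$, and iterated application of Theorem~\ref{thm:Lefschetz_Voisin} along $X\supset Y_{\{j_1\}}\supset\cdots\supset Y_J$ forces $H^1(Y_J)\cong H^1(X) = 0$. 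Outgoing $d_1$ lands in $\bigoplus_{i\notin I_0} H^1(Y_{I_0\cup\{i\}})$; for $k\geq 3$ and generic $Z$ each $Y_{I_0\cup\{i\}}$ is zero-dimensional, since its principal component has expected dimension $0$ and any extra component supported on the pencil $\PP^k$ of lines through a shared point $Z_{j+1}$ must satisfy $2k-2$ further incidence conditions, leaving expected dimension $2 - k \leq -1$. Higher differentials $d_r$ $(r\geq 2)$ vanish on $\mathbb{H}$ by strictness of MHS morphisms: after passing to a smooth simplicial hypercover of $Y$, every $E_1^{p,q}$ becomes pure of weight $q$, and a non-zero $d_r$ on the pure weight-$1$ subspace containing $\mathbb{H}$ would be a MHS morphism between pure pieces of different weights. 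Thus $\mathbb{H}$ embeds into $\operatorname{gr}^W_1 H^{2k-1}(Y)$, yielding
\[
g(X,Y) \geq h^{1,0}\bigl(H^{2k-1}(Y)\bigr) \geq \dim \mathbb{H}^{1,0} = g(Y_{I_0}) = 1 + \tfrac{k-3}{2}C_k.
\]

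The main obstacle is that the boundary divisors are \emph{not} in general position: adjacent Chow hypersurfaces $\CH(\ell_i)$ and $\CH(\ell_{i+1})$ share the entire Schubert variety of lines through the common point $Z_{i+1}$, so many intersections $Y_I$ are reducible and carry delicate mixed Hodge structures. The combinatorial hypothesis $n \geq 2(2k-1)$ is tailored precisely so that the chosen set $I_0$ together with all its incoming and outgoing $d_1$-interactions remains inside index sets with no consecutive pairs, the only regime in which Kleiman and Lefschetz apply cleanly; the dimension count then rescues the one outgoing place where a newly-created consecutive pair could in principle intervene. Making the higher-differentials step fully rigorous at the singular strata, presumably via Deligne's smooth hypercover machinery, is the technically heaviest part of the argument.
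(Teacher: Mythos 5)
Your overall skeleton matches the paper's: reduce via the relative long exact sequence \eqref{eq:relLES} to bounding $h^{1,0}(H^{2k-1}(\partial_a\aA_{n,k,2}))$, exhibit the complete-intersection curve cut out by $2k-1$ Chow hypersurfaces of pairwise skew lines, and compute its genus by adjunction (this is exactly Lemmas \ref{lem:Kleimann}, \ref{lem:curve_smooth_irreducible}, \ref{lem:genus_residual_curve}). The gap is in the survival step. In the closed Mayer--Vietoris spectral sequence your treatment of $d_1$ is essentially fine (modulo the finiteness of $Y_{I_0\cup\{i\}}$, which you only justify by an expected-dimension heuristic; the paper needs the same finiteness for $E\cap R_{2k-1}$ and it should be proved from genericity of $Z$, not just asserted). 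What is not controlled are the \emph{incoming higher differentials} $d_r$, $r\geq 2$, into position $(2k-2,1)$: their sources are subquotients of $\bigoplus_{|I|=2k-1-r}H^r(Y_I)$, and for index sets $I$ containing cyclically consecutive pairs the strata $Y_I$ are exactly the non-complete-intersection, reducible and singular loci you flag at the end. For such projective but singular $Y_I$ the groups $H^r$ with $r\geq 2$ may carry weight-one classes, so strictness of MHS morphisms does not force these differentials to vanish on the weight-one part, and your class $\mathbb{H}=H^1(Y_{I_0})$ could a priori be killed at $E_2$ or later. Your proposed remedy --- pass to a smooth simplicial hypercover so that $E_1^{p,q}$ is pure of weight $q$ --- replaces the spectral sequence by a different one: its $E_1$ page consists of cohomology of the smooth pieces of the hypercover, not of the actual strata $Y_I$, so neither the identification of $\mathbb{H}$ inside it nor your $d_1$-vanishing statements transfer without substantial extra work. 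As written, the decisive step is therefore unproven.

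For comparison, the paper circumvents the full nerve spectral sequence: it iterates a two-term Mayer--Vietoris \eqref{eq:MV} for $D_{\odd{j}}\cup R_j$, where $D_{\odd{j}}$ is a global complete intersection of Schubert hyperplane sections and $R_j$ is such a complete intersection met with a union of Schubert hyperplane sections. The required vanishing of $h^{p,0}$ on the flanking terms is obtained not from smoothness or general position (which, as you observe, fail for the full arrangement) but from the Lefschetz-type theorem for ample divisors whose complement is a local complete intersection, \cite[Theorem B]{popa_lefschetz}, packaged in Lemma \ref{lem:open_lci}, Lemma \ref{lem:CI_Hodge} and Corollaries \ref{cor:schubert_divisors_cohomology} and \ref{cor:injection}; this yields the chain of inequalities $h^{1,0}(H^{1}(E\cup R_{2k-1}))\leq\cdots\leq h^{1,0}(H^{2k-1}(\partial_a\aA_{n,k,2}))$ directly. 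To salvage your route you would need an analogous purity or Tate-ness input for every stratum $Y_I$ with consecutive indices that can feed a differential into $\mathbb{H}$, or a genuine degeneration statement for the singular-strata Mayer--Vietoris spectral sequence; neither is automatic.
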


\begin{remark}
    The lower bound from Theorem \ref{thm:pos_genus} is not tight. The number $1+\frac{k-3}{2}C_k$ is the genus of a single curve in the residual arrangement of the amplituhedron whose cohomology injects into the relative cohomology of the pair $(\gr(k,k+2),\partial_a\aA_{n,k,2}(Z))$. The residual arrangement in general contains multiple curves of positive genus. Moreover, one could in principle have non-zero contributions to the genus from higher-dimensional components of the residual arrangement. However, since our primary goal is to show that the genus of the pair is non-zero rather than to compute it exactly, we do not aim to improve this bound.
\end{remark}

First, let us make a note of the strategy we are going to employ in this section.
Since the Grassmannian is a smooth projective variety, we have $g(\gr(k,k+2),\partial_a\aA_{n,k,2})=g(\partial_a\aA_{n,k,2})$.
Therefore, in order to compute the genus of the pair, we need to understand the algebraic boundary of the amplituhedron.
Moreover, the boundary is reducible and hence its cohomology is essentially determined by multiple intersections of its components.
Therefore, we start by establishing a series of results that will help us compute the cohomology groups of intersections of boundary divisors of the amplituhedron.
Firstly, we show that dense subsets of global complete intersections inside the Grassmannian are local complete intersections.
\begin{lemma}\label{lem:open_lci}
    Let $H$ be a (not necessarily smooth) global complete intersection in $\gr(k,n)$.
    Then, any open subvariety $Y\subset H$ obtained as the complement of a divisor on $H$ is a local complete intersection.
\end{lemma}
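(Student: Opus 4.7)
The plan is to reduce to the well-known fact that a global complete intersection in a smooth ambient variety is automatically a local complete intersection (LCI), and then to observe that the LCI property is preserved under restriction to open subschemes.

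First, I would show that $H$ itself is LCI as a subscheme of $\gr(k,n)$. Writing $H = V(f_1, \ldots, f_c)$ scheme-theoretically, where $c = \codim_{\gr(k,n)} H$ and the $f_i$ are sections of appropriate line bundles on the Grassmannian, the key point is that $\gr(k,n)$ is smooth, and in particular its local rings are Cohen--Macaulay. The equality between the number of defining equations and the codimension then forces, at every point $p \in H$, the sequence $(f_1, \ldots, f_c)$ to be regular in the local ring $\mathcal{O}_{\gr(k,n),p}$ (a standard consequence of the Cohen--Macaulay property). This exhibits $H$ as locally cut out by a regular sequence in a smooth variety, which is exactly the definition of LCI.

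For the open subvariety $Y = H \setminus D$, I would take the Zariski closure $\overline{D}$ of $D$ in $\gr(k,n)$ and set $U = \gr(k,n) \setminus \overline{D}$. Then $U$ is an open, hence smooth, subvariety of the Grassmannian and $Y = H \cap U$. The restrictions of $f_1, \ldots, f_c$ to $U$ cut out $Y$ scheme-theoretically, and regular sequences restrict to regular sequences on open subsets, so $Y$ is LCI in $U$, and hence is an LCI scheme in the intrinsic sense.

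The main (and only) point that requires care is to correctly invoke the Cohen--Macaulay property of the smooth ambient to conclude that a global complete intersection of the expected codimension is cut out locally by a regular sequence. Once that standard commutative-algebra fact is in place, the rest of the argument is essentially bookkeeping, so no genuine obstacle is expected; the value of the lemma lies more in its clean statement for later use than in the depth of its proof.
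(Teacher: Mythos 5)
Your proof is correct, and it reaches the conclusion by a genuinely different route than the paper. The paper argues at the level of rings: the coordinate ring $\C[H]$ is a (global) complete intersection, the complete-intersection property is stable under localization (a Stacks project lemma), and $Y$ is identified with such a localization $\C[H]_f$ after choosing a single $f$ on the ambient Grassmannian cutting out the removed divisor on $H$. You instead argue pointwise, using that the smooth Grassmannian has Cohen--Macaulay local rings: since the $c=\codim H$ defining sections generate an ideal of height exactly $c$ at every point of $H$ (Krull's height theorem bounds each component's codimension by $c$, and the complete-intersection hypothesis forces equality), they form a regular sequence in each $\mathcal{O}_{\gr(k,n),p}$, so $H$ itself is a local complete intersection, and the property passes to the open subscheme $Y$. (Your detour through $U=\gr(k,n)\setminus\overline{D}$ is harmless but unnecessary: $D$ is already closed in $\gr(k,n)$, and since LCI is a local property, any open subscheme of an LCI scheme is LCI.) Your version is slightly more elementary and a bit more general: it never needs the removed divisor to be cut out on $H$ by a single ambient equation, which the paper's localization step implicitly uses (and which does hold in the applications, where the divisor is a hyperplane section), and it avoids the bookkeeping between the affine cone and the projective variety implicit in the identification $\operatorname{Spec}(\C[H]_f)_0\cong Y$. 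The paper's route, in exchange, is shorter once the localization lemma is cited.
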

\begin{proof}
    By assumption the coordinate ring $\C[H]$, as a finite-dimensional $\C$-algebra, is a (global) complete intersection. Any localization $\C[H]_g$ for $g\in C[H]$ is a complete intersection by \cite[Lemma 10.135.2]{stacks-project_lci}, so in particular a local complete intersection.
    By assumption $Y$ is the complement of a divisor in $H$, therefore we can find $f\in \C[\gr(k,n)]$ such that $f$ is not identically zero on $H$ and $\mathbb{V}(f)\cap H=Y$.
    Then, $\operatorname{Spec}(\C[H]_f)_0\cong Y$ is a local complete intersection.
\end{proof}

The following is a direct consequence of the Lefschetz type theorem \cite[Theorem B]{popa_lefschetz}.
\begin{lemma}\label{lem:CI_Hodge}
    Let $X$ be a (not necessarily smooth) global complete intersection of hyperplane sections in the Grassmannian $\gr(k,n)$.
    Then, the $\ell$-th cohomology group $H^\ell(X)$ is isomorphic to $H^\ell(\gr(k,n))$ for all $\ell\leq \dim X-1$. In particular, the Hodge structure on $H^\ell(X)$ is pure of Tate type of weight $\ell$ for all $\ell\leq \dim X -1$.
\end{lemma}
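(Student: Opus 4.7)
The plan is to invoke the Lefschetz-type theorem of \cite[Theorem B]{popa_lefschetz} directly and then read off the Hodge-theoretic consequences from the well-known Hodge structure on the Grassmannian. First, I would verify that the hypotheses of Popa's theorem apply in our setting: the ambient variety $\gr(k,n)$ is smooth projective, and the hyperplane sections cutting out $X$ are restrictions of ample divisors in the Plücker embedding, so $X$ is a global complete intersection of ample hypersurfaces. Popa's theorem then gives that the restriction map
\[
H^\ell(\gr(k,n)) \longrightarrow H^\ell(X)
\]
is an isomorphism of mixed Hodge structures for $\ell \leq \dim X - 1$, even though $X$ may be singular.

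Next, I would invoke the standard fact that $\gr(k,n)$ admits a cellular decomposition by Schubert cells of even real dimension. This implies $H^\ell(\gr(k,n)) = 0$ for $\ell$ odd, and that for $\ell = 2p$ the cohomology group is spanned by algebraic classes, hence pure of Hodge type $(p,p)$ and thus of weight $2\ell$ and Tate type. Transporting this along the isomorphism from Popa's theorem yields the desired purity and Tate property for $H^\ell(X)$ in the range $\ell \leq \dim X - 1$.

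The only potential obstacle is confirming the precise form of Popa's theorem: one must check that it applies to (possibly singular) complete intersections of ample divisors and that the conclusion is indeed an isomorphism of mixed Hodge structures (not only of $\Q$-vector spaces) in the stated range. Once this is verified, the statement of the lemma follows immediately without further computation.
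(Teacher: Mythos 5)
Your overall strategy (Popa's Lefschetz-type theorem plus the fact that the cohomology of the Grassmannian is algebraic, hence pure of Tate type) is the same as the paper's, but there is a genuine gap exactly at the point you flag and then defer. Theorem B of \cite{popa_lefschetz}, as it is used here, is a statement about a \emph{single} ample effective Cartier divisor $D$ in a (possibly singular) projective variety $Y$, and its hypothesis is not just ampleness: one needs control on the local cohomological defect of the open complement $Y\setminus D$. It does not directly apply to a complete intersection $X$ of codimension $r>1$ in $\gr(k,n)$ in one shot, so "invoke the theorem directly" is not a proof for $r\geq 2$. Note also that you cannot fall back on the classical statement for complete intersections in projective space, since $X$ is a complete intersection only inside the Grassmannian, not inside $\PP^{\binom{n}{k}-1}$.

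The paper resolves this by induction on the codimension: write $X=H_1\cap\cdots\cap H_r$, assume the claim for $H_{[r-1]}=H_1\cap\cdots\cap H_{r-1}$, and apply Theorem B to the ample Cartier divisor $X\subset H_{[r-1]}$. The price is that the ambient variety $H_{[r-1]}$ is now singular, so one must verify the local cohomological defect hypothesis; this is exactly the role of Lemma \ref{lem:open_lci}, which shows that the complement $H_{[r-1]}\setminus X$ is a local complete intersection and hence has vanishing defect by \cite[Example 2.5]{popa_lefschetz}. (The base case $r=1$ is the usual Lefschetz theorem, Theorem \ref{thm:Lefschetz_Voisin}, since the complement in the smooth Grassmannian of one hyperplane section is smooth.) Your proposal omits both the induction and any verification of the defect/lci condition, and since this is where the actual content of the argument lies, the proof is incomplete as written. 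The remaining step — transporting purity and the Tate property along the restriction isomorphism, which is a morphism of mixed Hodge structures and hence an isomorphism of such — is fine and matches the paper.
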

\begin{proof}
    Since $X$ is a global complete intersection of linear spaces in $\gr(k,n)$, say of codimension $r$, we can find hyperplane sections $H_1,\ldots,H_r\subset \gr(k,n)$ such that $X=H_1\cap\ldots \cap H_r$. 
    We also write $H_{[j]}=\bigcap_{i\in [j]}D_i$ for any $j\in [r]$.
    We prove the claim by induction on the codimension $r$ of $X$.
    The case $r=0$ is tautological.
    If $r =1$, we know that $\gr(k,n)\setminus X$ is smooth, and the claim follows by the Lefschetz hyperplane theorem, see Theorem \ref{thm:Lefschetz_Voisin}.
    
    Now let $r$ be arbitrary, and $X=H_{[r]}$.
    So suppose $H^\ell(H_{[r-1]})\cong H^\ell(\gr(k,n))$ for $\ell<n-r+1$.
    Next, note that $X \subset H_{[r-1]}\subset \gr(k,n)$ is a very ample effective Cartier divisor, since the Grassmannian $\gr(k,n)$ has an embedding into projective space  -- the usual Pl\"ucker embedding.
    Moreover, since $H_{[r-1]}$ is a global complete intersection by definition, it follows from Lemma \ref{lem:open_lci} that the complement $U=H_{[r-1]}\setminus X$ is a local complete intersection, in particular $U$ has vanishing local cohomology defect by \cite[Example 2.5]{popa_lefschetz}.
    Then, \cite[Theorem B]{popa_lefschetz} applies, showing that $H^\ell(X)\cong H^\ell(H_{[r-1]})\cong H^\ell(\gr(k,n))$ for all $\ell<\dim X=n-r$, finishing the proof.
\end{proof}

As a corollary, we obtain a result about Hodge structures on divisors in complete intersections inside the Grassmannian.

\begin{corollary}\label{cor:schubert_divisors_cohomology}
    Let $X$ be a (not necessarily smooth) $d$-dimensional global complete intersection of hyperplane sections in the Grassmannian $\gr(k,n)$. 
    Further, let $r$ be a positive integer and $D=D_1\cup\ldots\cup D_r$ be a union of $r$ Schubert hyperplane sections $D_i$ in $\gr(k,n)$.
    Then, the $\ell$-th cohomology group $H^\ell(X\cap D)$ is isomorphic to $H^\ell(\gr(k,n))$ for all $\ell\leq d-2$, in particular its Hodge structure is pure of weight $\ell$ and Tate type.
\end{corollary}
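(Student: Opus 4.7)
The plan is to mirror the inductive step in the proof of Lemma~\ref{lem:CI_Hodge} by chaining a Lefschetz-type isomorphism for singular hypersurfaces with Lemma~\ref{lem:CI_Hodge} itself. First, I would set up the correct divisor-theoretic picture: each Schubert hyperplane section $D_i$ is cut out in the Pl\"ucker embedding $\gr(k,n) \hookrightarrow \PP^N$ by a single linear form $\ell_i$, so the union $D = D_1 \cup \ldots \cup D_r$ is the vanishing locus of the degree-$r$ polynomial $\prod_{i=1}^r \ell_i$. After discarding any $D_i$ that happen to contain $X$ (for which the intersection contributes trivially), we may regard $X \cap D$ as an effective, very ample Cartier divisor on $X$ corresponding to $\mathcal{O}_X(r)$; in particular it has pure dimension $d - 1$.

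Next, I would apply \cite[Theorem B]{popa_lefschetz} to the pair $(X, X \cap D)$. The only non-trivial input is that the complement $X \setminus (X \cap D)$ has vanishing local cohomology defect, which follows exactly as in the proof of Lemma~\ref{lem:CI_Hodge}: since $X$ is a global complete intersection in $\gr(k,n)$ and $X \cap D$ is a divisor on $X$, Lemma~\ref{lem:open_lci} identifies the complement as a local complete intersection, and this suffices by \cite[Example 2.5]{popa_lefschetz}. Popa's theorem then yields the isomorphism $H^\ell(X) \cong H^\ell(X \cap D)$ for every $\ell \leq d - 2$.

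To finish, I would combine this with the isomorphism $H^\ell(X) \cong H^\ell(\gr(k,n))$ for $\ell \leq d - 1$ supplied by Lemma~\ref{lem:CI_Hodge}, which immediately produces $H^\ell(X \cap D) \cong H^\ell(\gr(k,n))$ for $\ell \leq d - 2$. Purity of weight $2\ell$ and Tate type then transfer automatically from the well-known Hodge structure on the cohomology of the Grassmannian. The only point requiring real care is the verification of the regularity hypothesis of \cite[Theorem B]{popa_lefschetz} — namely, confirming that $X \cap D$ is a genuine Cartier divisor in $|\mathcal{O}_X(r)|$ and that $X \setminus (X \cap D)$ really is a local complete intersection. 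Once that is settled via Lemma~\ref{lem:open_lci}, the rest of the argument is a short chain of two isomorphisms, entirely parallel to the inductive step of Lemma~\ref{lem:CI_Hodge} applied one floor lower in the tower $\gr(k,n) \supset X \supset X \cap D$.
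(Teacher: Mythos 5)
Your proposal is correct and follows essentially the same route as the paper's proof: identify $X\cap D$ as an ample effective Cartier divisor in the class of $\mathcal{O}_X(r)$, verify the vanishing of the local cohomology defect of the complement via Lemma~\ref{lem:open_lci} and \cite[Example 2.5]{popa_lefschetz}, apply \cite[Theorem B]{popa_lefschetz}, and conclude with Lemma~\ref{lem:CI_Hodge}. Your extra remark about components $D_i$ containing $X$ is a harmless refinement (in that degenerate case $X\cap D=X$ and Lemma~\ref{lem:CI_Hodge} already gives the claim), and otherwise the argument matches the paper's.
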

\begin{proof}
    By assumption $X$ is a global complete intersection, hence it follows from Lemma \ref{lem:open_lci}, that $U=X\setminus (D\cap X)$ is a local complete intersection.
    In particular, its local cohomology defect vanishes by \cite[Example 2.5]{popa_lefschetz}.
    Next, the line bundle $\mathcal{O}_{\grk}(r)\vert_X$ is ample.
    Then, $X\cap D$ is a section of this line bundle and as such ample itself; therefore $X\cap D$ is an ample effective Cartier divisor in $X$.
    Then, $H^{\ell}(X\cap D)\cong H^\ell(X)$ for all $\ell \leq d-2$ by \cite[Theorem B]{popa_lefschetz}.
    The cohomology of $H^\ell(X)$ is isomorphic to $H^\ell(\gr(k,n))$, as computed in Lemma \ref{lem:CI_Hodge}, finishing the proof.
\end{proof}

Next, we show that, in the same setting, we get a very useful lower bound for some Hodge numbers in the Hodge filtrated parts of cohomology groups in the Mayer--Vietoris sequence.
\begin{corollary}\label{cor:injection}
    Let $X$ be a (not necessarily smooth) $d$-dimensional global complete intersection of hyperplane sections in the Grassmannian $\gr(k,n)$. 
    Further, let $D_1,\ldots,D_r$ be Schubert hyperplane sections in $\gr(k,n)$, we let $D=X\cap D_1$ and $R = X\cap (D_2\cup\ldots\cup D_r)$ be the intersection of $X$ with such a Schubert divisor, and a union of Schubert divisors, respectively.
    Then, we get an injection in the Mayer--Vietoris long exact sequence in cohomology
    \[
    H^{\ell-1}(D\cap R)^{p,0}\hookrightarrow H^\ell(D\cup R)^{p,0}
    \]
    for all $\ell\leq d-1$ and $p>0$. In particular, $h^{p,0}( H^{d-2}(D\cap R))\leq h^{p,0}( H^{d-1}(D\cup R))$. 
\end{corollary}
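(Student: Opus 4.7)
The plan is to read off the injection directly from the Mayer--Vietoris long exact sequence \eqref{eq:MV} for the decomposition $D \cup R$. Concretely, I would extract the segment
\[
(H^{\ell-1}(D) \oplus H^{\ell-1}(R))^{p,0} \to H^{\ell-1}(D \cap R)^{p,0} \to H^\ell(D \cup R)^{p,0},
\]
obtained by passing to $(p, 0)$-graded pieces. Because Mayer--Vietoris is a long exact sequence of mixed Hodge structures whose maps are morphisms of mixed Hodge structures, and such morphisms are strict with respect to both the weight and Hodge filtrations, the induced sequence on graded pieces remains exact. The injection claimed in the statement thus amounts to the vanishing of the leftmost term for every $p > 0$.

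For that vanishing I would invoke the two preceding results of the section. Since $D = X \cap D_1$ is obtained from $X$ by cutting with one more Schubert hyperplane section, it is itself a global complete intersection of hyperplane sections in $\gr(k,n)$, so Lemma \ref{lem:CI_Hodge} applies: in the range $\ell - 1 \leq \dim D - 1 = d - 2$, i.e. $\ell \leq d - 1$, the group $H^{\ell - 1}(D)$ is isomorphic to $H^{\ell - 1}(\gr(k,n))$ and hence pure of Tate type of weight $2(\ell - 1)$. Likewise, $R = X \cap (D_2 \cup \cdots \cup D_r)$ is precisely the situation of Corollary \ref{cor:schubert_divisors_cohomology}, which gives the same purity and Tate-type statement for $H^{\ell - 1}(R)$ in the same range. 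A pure Hodge structure of Tate type of weight $w$ is concentrated in bidegree $(w/2, w/2)$, so its $(p, 0)$-piece vanishes for $p > 0$. Combining these two inputs, $(H^{\ell - 1}(D) \oplus H^{\ell - 1}(R))^{p, 0} = 0$ for all $p > 0$ and all $\ell \leq d - 1$, and the desired injection follows. Taking dimensions with $\ell = d - 1$ yields the "in particular" inequality.

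I do not expect a real obstacle: the argument is a short bookkeeping exercise combining Mayer--Vietoris with the purity results already established. The one point worth double-checking is that the range $\ell \leq d - 1$ in the statement is compatible with the hypotheses of Lemma \ref{lem:CI_Hodge} and Corollary \ref{cor:schubert_divisors_cohomology} applied to $D$ and $R$; since both have dimension $d - 1$, the required bound translates to $\ell - 1 \leq d - 2$, which is exactly $\ell \leq d - 1$, so the ranges line up.
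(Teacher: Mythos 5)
Your proposal is correct and follows essentially the same route as the paper: take the Mayer--Vietoris sequence for $D\cup R$ as a long exact sequence of mixed Hodge structures, observe that $H^{\ell-1}(D)\oplus H^{\ell-1}(R)$ is pure of Tate type in the stated range (the paper cites Corollary \ref{cor:schubert_divisors_cohomology} for both summands, which for $D=X\cap D_1$ is the $r=1$ case of your appeal to Lemma \ref{lem:CI_Hodge}), so its $(p,0)$-part vanishes for $p>0$, giving the injection on $(p,0)$-pieces; your explicit remark about strictness of morphisms of mixed Hodge structures is exactly the justification the paper leaves implicit.
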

\begin{proof}
    We consider the Mayer--Vietoris long exact sequence in cohomology for the union $D\cup R$, see Equation \eqref{eq:MV}:
    \[
    \cdots \to H^{\ell-1}(D) \oplus H^{\ell-1}(R)\to H^{\ell-1}(D\cap R) \to H^\ell(D\cup R) \to H^\ell(D) \oplus H^\ell(R) \to \cdots.
    \]
    The term $H^{\ell-1}(D) \oplus H^{\ell-1}(R)$, with $D=X\cap D_1$ and $R=X\cap(D_2\cap\ldots\cap D_r)$, admits a pure Hodge structure of Tate type for all $\ell\leq d-1$ by Corollary \ref{cor:schubert_divisors_cohomology}.
    In particular, the Hodge numbers $h^{p,0}$ vanish for all $p>0$.
    Therefore, the claim follows since the Mayer--Vietoris sequence is a long exact sequence of mixed Hodge structures, see \cite[C.30]{Dimca_sing_hypersurfaces}.
\end{proof}

We fix some notation for the remainder of this section.
For $i\in [n]$, let $D_i$ be the irreducible components of the algebraic boundary $\partial_a\aA_{n,k,2}$. In particular, $D_i$ denotes the Chow hypersurface $\CH(L_i)$ of the line $L_i\subset \C^{k+2}$ spanned by the rows $Z_i$ and $Z_{i+1}$ of the matrix $Z$.
Further, for $I\subset [n]$ we write $D_I$ for the intersection $\bigcap_{i\in I}D_i$. Lastly, for any odd $j\in [n]$, we write $D_{\odd j}$ for the intersection of $D_i$ for all odd integers $i\in[j]$.

\begin{lemma}\label{lem:Kleimann}
    Let $I$ be a subset of $[n]$ such that $i\in I\Rightarrow i+1\notin I$.
    Then, the intersection $D_I=\bigcap_{i\in I}D_i$ is a global complete transversal intersection in $\grk$.
    Geometrically speaking, for a set of pairwise skew lines $L_i$ the intersection of the corresponding Schubert divisors $D_i$ is transversal and a global complete  intersection.
    In particular, $D_{\odd j}$ is a transversal and global complete intersection for any odd $j\in [n]$.
\end{lemma}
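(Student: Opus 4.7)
The plan is to reinterpret the $D_i$ as Schubert hyperplane sections in the Plücker embedding and then invoke Kleiman's transversality theorem, using the genericity of $Z$ to guarantee that the Schubert conditions impose independent constraints.

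First, I would recall that $D_i = \CH(L_i)$ is exactly the Schubert divisor $\{Y \in \grk : Y \cap L_i \neq 0\}$, and that this locus is cut out by the twistor coordinate $\langle Y Z_i Z_{i+1}\rangle$. By Laplace expansion this twistor coordinate is a linear form in the Plücker coordinates of $Y$, so each $D_i$ is a hyperplane section of $\grk$ in its Plücker embedding. Thus $D_I$ is set-theoretically cut out by $|I|$ hyperplane sections of $\grk$; what remains is to show the intersection is scheme-theoretically transversal and of the expected pure codimension $|I|$.

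Second, I would exploit the combinatorial hypothesis. The condition $i \in I \Rightarrow i+1 \notin I$ means the pairs $\{i, i+1\}$, $i \in I$, are pairwise disjoint subsets of $[n]$. Consequently each line $L_i = \operatorname{span}_{\C}(Z_i, Z_{i+1})$ is spanned by a pair of rows of $Z$ disjoint from those determining any other $L_j$, $j \in I$. As $Z$ varies over the open dense set of $n \times (k+2)$ matrices with positive maximal minors, the $2|I|$ rows $\{Z_i, Z_{i+1}\}_{i \in I}$ vary independently through dense opens of $\C^{k+2}$. Hence the tuple $(L_i)_{i \in I}$ sweeps out a dense open subset of $\gr(2, k+2)^{|I|}$; in particular the lines become pairwise skew and collectively in linear general position.

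Third, I would apply Kleiman's transversality theorem (\cite[Theorem 1.7]{3264andallthat}) to the transitive action of $GL_{k+2}$ on $\grk$. This action sends Schubert divisors of lines to Schubert divisors of lines, so each $D_i$ is a $GL_{k+2}$-translate of a fixed reference divisor $D_0 = \{Y : Y \cap L_0 \neq 0\}$. Applied iteratively, Kleiman's theorem ensures that for $(L_i)_{i \in I}$ in a dense open subset of $\gr(2, k+2)^{|I|}$ the intersection $\bigcap_{i \in I} D_i$ is reduced, of pure codimension $|I|$, and transversal at each point where each $D_i$ is smooth. Combined with the previous step, the genericity of $Z$ places us in this open locus, so $D_I$ is a transversal global complete intersection. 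The statement about $D_{\odd j}$ is then immediate, since any set of odd integers in $[j]$ trivially has no two consecutive elements.

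The main obstacle is verifying that the combinatorial hypothesis truly provides enough genericity for Kleiman, i.e.\ that the evaluation map from the space of valid matrices $Z$ to the configuration space $\gr(2, k+2)^{|I|}$ has dense image. This is precisely where the disjointness of the pairs $\{i, i+1\}$ is essential: if some $i, i+1$ both belonged to $I$, then $L_i$ and $L_{i+1}$ would share the row $Z_{i+1}$, forcing a non-generic correlation of the two lines that would obstruct the direct appeal to Kleiman.
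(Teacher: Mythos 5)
Your proposal is correct and follows essentially the same route as the paper: identify each $D_i$ as a Schubert hyperplane section, note that the disjointness of the pairs $\{i,i+1\}$ together with the genericity of $Z$ puts the lines $L_i$ in general position, and invoke Kleiman's transversality theorem for the transitive $\operatorname{GL}_{k+2}$-action on $\grk$ to get a transversal intersection of the expected codimension, hence a global complete intersection. Your added remarks on the linearity of the twistor coordinates in Plücker coordinates and on the density of the line configurations swept out by admissible $Z$ only make explicit what the paper leaves implicit.
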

\begin{proof}
    For the Grassmannian $\grk$ the general linear group $\operatorname{GL}_{\C}(k+2)$ acts transversely on it as an algebraic group.
    By our choice of $I$, and since $Z$ is generic, the lines $L_i$ are in general position.
    Therefore, Kleiman's theorem, \cite[Theorem 1.3]{3264andallthat}, applies, showing that the intersection $\bigcap_{i\in I}D_i$ is generically transversal. This shows the first part of the statement.
    Lastly, notice that if we consider $D_{\odd j}$, then in particular the lines $L_i$ for $i\in([j])$ are skew, and thus in general position, so this is a special case of the above.
    This finishes the proof.
\end{proof}

\begin{lemma}\label{lem:curve_smooth_irreducible}
    Let $k\geq 2$ and $n\geq 2(2k-1)$. Take $I\subset [n]$, with $|I|=2k-1$, and such that the lines $L_i = \operatorname{span}(Z_i,Z_{i+1})$ for $i\in I$ are pairwise skew.
    Then, the intersection $E=\bigcap _{i\in I}\CH(L_i)$ in $\grk$ is a smooth, irreducible and reduced curve.
\end{lemma}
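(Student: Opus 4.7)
The plan is to establish three properties of $E$ separately---that it is a one-dimensional global complete intersection, that it is smooth, and that it is connected---and then deduce irreducibility and reducedness from the last two.

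The dimension and complete intersection property follow directly from Lemma \ref{lem:Kleimann}: pairwise skewness of the lines $L_i$ for $i\in I$ guarantees that the Chow hypersurfaces $\CH(L_i)$ intersect transversally in $\gr(k,k+2)$, so $E$ is a transverse global complete intersection of codimension $2k-1$ in the $2k$-dimensional Grassmannian, hence of dimension one.

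For connectedness I would invoke the Fulton--Hansen connectedness theorem. Under the Pl\"ucker embedding, each $\CH(L_i)$ is the restriction of a hyperplane $H_i\subset\PP^{N}$ with $N=\binom{k+2}{2}-1$, so $E=\gr(k,k+2)\cap W$, where $W=\bigcap_{i\in I}H_i$ is a linear subspace of dimension $N-(2k-1)$. The inequality
\[
\dim\gr(k,k+2)+\dim W \;=\; 2k+(N-2k+1) \;=\; N+1 \;>\; N
\]
is exactly the hypothesis of the Fulton--Hansen theorem in $\PP^N$, and yields connectedness of $E$.

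Smoothness is the most delicate step. The singular locus of each Chow hypersurface $\CH(L)$ in $\gr(k,k+2)$ is the sub-Schubert variety $\{V : L\subset V\}\cong\gr(k-2,k)$, of dimension $2k-4$. I would show that, for generic $Z$, the curve $E$ is disjoint from each of these singular loci: if some $V\in E$ contained a line $L_i$, then $V$ would split as $L_i+V'$ for a $(k-2)$-plane $V'$ while still needing to meet each of the remaining $2k-2$ lines $L_j$. A dimension count in the parameter space of line configurations---or equivalently an application of Kleiman's theorem to the proper Schubert subvariety $\{V : L_i\subset V\}$ against $\bigcap_{j\neq i}\CH(L_j)$ under the transitive $\operatorname{GL}(k+2)$-action---shows that the locus of tuples admitting such a $V$ has positive codimension, hence is avoided by generic $Z$. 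With $E$ contained entirely in the smooth locus of every $\CH(L_i)$, the transversality from Lemma \ref{lem:Kleimann} upgrades via the Jacobian criterion to genuine smoothness of $E$. Combining the three properties, $E$ is a smooth connected scheme of pure dimension one, hence irreducible and reduced. The main obstacle is precisely this smoothness step: carefully justifying by a dimension count that the genericity of $Z$ keeps $E$ away from every singular Schubert subvariety $\{V : L_i\subset V\}$, so that the Jacobian criterion can be applied.
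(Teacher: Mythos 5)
Your proposal is correct and follows essentially the same route as the paper's proof: dimension and transversality come from Lemma \ref{lem:Kleimann}, smoothness is obtained by identifying $\sing\CH(L_i)$ with the Schubert variety $\{V : L_i\subset V\}$ of dimension $2k-4$ and ruling out its intersection with $\bigcap_{j\neq i}\CH(L_j)$ (codimension $2k-2$) by the same dimension count under genericity of $Z$, and irreducibility and reducedness follow from smoothness together with connectedness of the linear section in the Pl\"ucker embedding. The only cosmetic difference is that you invoke Fulton--Hansen for connectedness, whereas the paper cites a connectedness theorem for linear sections; the effect is the same.
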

\begin{proof}
    By assumption the lines $L_i$ are skew and since $Z$ is generic the lines are in general position, so Lemma \ref{lem:Kleimann} applies, stating that the dimension of $E$ is indeed one.
    Moreover, it also follows from the same lemma that for all $J\subset I$ the intersections $\bigcap_{j\in J}D_j$ are global complete intersections, and in particular are transversal.
    Therefore, we have 
    \begin{align}\label{eq:singular_locus_E}
        \sing (E)=\bigcup_{i\in I}\left(\sing D_i\cap \bigcap_{j\neq i}D_j\right),
    \end{align}
    since points that are smooth on each component will be smooth points of the intersection by the transversality of the intersection.
    We want to show that the union in \eqref{eq:singular_locus_E} is empty, and in order to do so we consider the following incidence variety:
    \[
    \Sigma_i=\{ (p,[\Lambda])\mid p\in \Lambda  \}\subset L_i\times \grk.
    \] 
    Since this incidence variety $\Sigma_i$ is smooth, see e.g. \cite[Proposition 3.4]{koefler2025takingamplituhedronlimit}, it follows from \cite[Corollary 14.10]{Harris_AG}, that a point $[\Lambda] \in D_i$ is smooth if and only if $[\Lambda] \cap L_i=\{p\}$.
    Conversely, the singular locus $\sing D_i$ is given  by the subset of $D_i\subset \gr(k,k+2)$ of all $k$-planes $[\Lambda]$ that contain the line $L_i$.
    In particular, the codimension of the singular locus $\sing D_i$ inside $D_i$ is $3$, that is $\dim \sing D_i=2k-4$.
    By the genericity assumption on $Z$, the intersection $\sing D_i\cap \bigcap_{j\neq i\in I}D_j$ is also transversal.
    So the dimension count 
    \[
    \dim\sing D_i-\codim \bigcap_{j\neq i}D_i=2k-4-2k+2=-2
    \]
    shows that this intersection is empty.
    Since $D_i$ was arbitrary, by Equation \eqref{eq:singular_locus_E} the curve $E$ must be smooth. And since $E$ is smooth it must be reduced as a scheme, else the Jacobian criterion for smoothness fails.
    
    It remains to show that the curve is irreducible. For a smooth curve it suffices to show that $E$ is connected.
    This follows from \cite[Theorem 1.1]{martinelli2017connectedness} applied to the Pl\"ucker embedding $\grk\hookrightarrow\PP^{\binom{k+2}{2}-1}$.
\end{proof}

\begin{example}\label{ex:A10_3_2} 
    Let $n=10$, $k=3$, $m=2$, and set $Z$ to be the $10\times 5$ matrix obtained by stacking $10$ points of the rational normal curve of degree $4$ on top of each other.
    More succinctly, 
    \[
        Z = \begin{bmatrix}
            1 & 0 & 0 & 0 & 0\\
            1 & 1 & 1^2 & 1^3 & 1^4 \\
            \vdots & \vdots & \vdots & \vdots & \vdots \\
            1 & 10 & 10^2 & 10^3 & 10^4
        \end{bmatrix}\in \R^{10\times 5}.
    \]
    We consider the amplituhedron $\aA_{10,3,2}(Z)$, with the curve $E=D_{\odd{9}}=D_1\cap D_3\cap D_5\cap D_7 \cap D_9$.
    Then, a computation in \texttt{Macaulay2} \cite{M2} yields that the prime ideal defining the curve $E$ in the coordinate ring $\C[\gr(3,5)]$, is generated by the following five polynomials:
    {\footnotesize
    \begin{align} 
        &-p_{234}-p_{235}-p_{245}+p_{345},\\
        &216p_{123} - 180p_{124} + 114p_{134} - 65p_{234} + 36p_{125} + 30p_{135} - 19p_{235} + 6p_{145} - 5p_{245} + p_{345}, \\
        &8000p_{123} - 3600p_{124} + 1220p_{134} - 369p_{234} + 400p_{125} + 180p_{135} - 61p_{235} + 20p_{145} - 9p_{245} + p_{345}, \\
        &74088p_{123} - 22932p_{124} + 5334p_{134} - 1105p_{234} + 1764p_{125} + 546p_{135} - 127p_{235} + 42p_{145} - 13p_{245} + p_{345}, \\
        &373248p_{123} - 88128p_{124} + 15624p_{134} - 2465p_{234} + 5184p_{125} + 1224p_{135} - 217p_{235} + 72p_{145} - 17p_{245} + p_{345}.
    \end{align}
    }\noindent
    These are precisely the generators of the vanishing ideals of the divisors labeled by odd indices $D_1$, $D_3$, $D_5$, $D_7$ and $D_9$.
    We then also verify that $E$ is smooth and has genus one, in alignment with the following lemma. The code for this example is available at \cite{zenodo}.
\end{example}
We compute the (geometric) genus of the curve $E$ from Lemma \ref{lem:curve_smooth_irreducible}.
\begin{lemma}\label{lem:genus_residual_curve}
    Let $E$ be a curve obtained from the $(2k-1)$-fold transversal intersection of $D_i=\CH(L_i)$, where the lines $L_i\subset \PP(\C^{k+2})$ spanned by $Z_i$ and $Z_{i+1}$ are pairwise disjoint.
    Then, for $k\geq 3$ the (geometric) genus of $E$  can be computed as 
    \[
    g(E)=1+\frac{k-3}{2}C_k,
    \]
    where $C_k$ is the $k$-th Catalan number. For $k=1,2$ the genus of the curve $E$ is zero.
\end{lemma}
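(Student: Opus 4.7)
The plan is to apply the adjunction formula directly. By Lemmas~\ref{lem:Kleimann} and~\ref{lem:curve_smooth_irreducible}, the curve $E$ is smooth and irreducible, and it is realized as the transversal global complete intersection of the $2k-1$ hyperplane sections $\CH(L_i)$ inside the smooth projective Grassmannian $\grk$. In particular, its normal bundle splits as the direct sum of the $2k-1$ restrictions of Plücker line bundles, so adjunction gives
\[
K_E \;=\; \Bigl(K_{\grk} + \sum_{i \in I} [\CH(L_i)]\Bigr)\Big|_E.
\]

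Writing $H$ for the Plücker hyperplane class, I would use the classical identity $K_{\gr(k,n)} = -n\,H$, specialized to $n = k+2$, to get $K_{\grk} = -(k+2)\,H$. Each $\CH(L_i)$ is a Plücker hyperplane section of $\grk$ (compare with Proposition~\ref{prop:m=2alg_bd}), so $[\CH(L_i)] = H$, which combined with the previous display yields
\[
K_E \;=\; \bigl(-(k+2) + (2k-1)\bigr)\,H|_E \;=\; (k-3)\,H|_E.
\]

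To finish, I would compute $\deg_H E$ as an intersection number in $\grk$. Since $E$ is a transversal complete intersection of $2k-1$ Plücker hyperplane sections, its class is $H^{2k-1}$, and therefore
\[
\deg_H E \;=\; H^{2k} \;=\; \deg \grk \;=\; C_k,
\]
where the last equality is the specialization of~\eqref{eq:Grkn_deg} to $n = k+2$, which simplifies to the $k$-th Catalan number exactly as in~\eqref{eq:Gr2_deg} for lines. Combining, $\deg K_E = (k-3)\,C_k$, and the Riemann--Roch identity $2g(E) - 2 = \deg K_E$ yields the claimed value $g(E) = 1 + \tfrac{k-3}{2}\,C_k$.

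The boundary cases $k = 1, 2$ still give $g(E) = 0$ from the same formula; one could alternatively verify this directly, since for $k=1$ the curve $E$ is a line in $\grk \cong \PP^{2}$, and for $k = 2$ it is a conic on the quadric $\gr(2,4) \subset \PP^{5}$. I do not expect a substantive obstacle in this argument: the hypotheses needed for adjunction (smoothness of $E$ and its realization as a transversal complete intersection of Cartier divisors in a smooth variety) are precisely what is supplied by the preceding lemmas, while the canonical class of $\grk$ and the Catalan-number formula for its Plücker degree are classical.
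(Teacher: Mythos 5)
Your proposal is correct and follows essentially the same route as the paper: adjunction for the transversal complete intersection $E$ (the paper phrases it via the splitting of the normal bundle and $\omega_E \cong \omega_{\grk}\otimes\wedge^{2k-1}N_E\grk$, which is your $K_E = (K_{\grk}+\sum_i[\CH(L_i)])|_E$), the identification $K_{\grk} = \mathcal{O}_{\grk}(-k-2)$, the degree count $[E]=[H]^{2k-1}$ with $[H]^{2k} = \deg\grk = C_k$, and Riemann--Roch. The only cosmetic difference is that the paper verifies $k=1,2$ directly (line in $\PP^2$, conic in $\gr(2,4)$) rather than by extending the formula, which you also note as an alternative.
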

\begin{proof}
    Let $k=1$. Then, we are in $\gr(1,3)\cong \PP^2$, and the divisors $D_i$ are lines in $\PP^2$. In particular, $E$ is such a line and thus has genus $0$.
    For $k=2$ we get a linear section of $\gr(2,4)$, which is a conic and thus has genus $0$. 
    Now, let $k\geq 3$.
    By construction we can apply Lemmas \ref{lem:Kleimann} and \ref{lem:curve_smooth_irreducible} to $E$, showing that this curve is a smooth irreducible complete intersection in $\grk$.
    Therefore, it follows from \cite[B.7.4]{Fulton1998}, that that the normal bundle $N_E\grk$ of $E\subset\grk$ decomposes as 
    \begin{align}\label{eq:decomp}
    N_E\grk=\bigoplus_{i=1}^{2k-1}N_{D_i}\grk\vert_E,
    \end{align}
    where $N_{D_i}\grk$ are the normal bundles of the divisors $D_i$.
    Since all of the divisors $D_i$ can be extended to hyperplane sections in $\grk$, we can identify the normal bundles $N_{D_i}\grk\vert_E$ with the line bundles $\mathcal O_{\grk}(1)$.
    It then follows from \cite[Proposition 8.20]{Hartshorne1977} that
    \[
    \omega_E\cong \omega_{\grk}\otimes\wedge^{2k-1}N_{E}\grk,
    \]
    where $\omega_E$, and $\omega_{\grk}$ refer to the canonical divisors of $E$ and $\grk$, respectively.
    Then, using the the decomposition in Equation \eqref{eq:decomp}, we identify $\wedge^{2k-1}N_{E}\grk$ with the line bundle $\mathcal{O}_{\grk}(2k-1)$.
    Then, since the canonical divisor $\omega_{\grk}$ on the Grassmannian is isomorphic to $\mathcal{O}_{\grk}(-k-2)$, see \cite[Section 5.7.4]{3264andallthat},
    we get
    \begin{align}\label{eq:can_div_curve}
    \omega_E\cong \mathcal{O}_{\grk}(-k-2)\otimes\mathcal{O}_{\grk}(2k-1)\cong \mathcal{O}_{\grk}(k-3).
    \end{align}
    Therefore, we can now apply the Riemann--Roch theorem, see e.g. \cite[Theorem 3.23]{shafarevich}, so that
    \[
    \deg_E \omega_E=(k-3)[H][E]=2g-2,
    \]
    where $[E]$, and $[H]$ are the classes in the Chow ring $A^\bullet(\grk)$ represented by $E$ and a hyperplane section $H$ in $\grk$, respectively.
    Therefore, we get $g=1+\frac{(k-3)}{2}[H]^{2k}$ since $[E]=[H]^{2k-1}$, as $E$ is a complete intersection by Lemma \ref{lem:Kleimann}.
    Finally, recall from Equation \eqref{eq:Gr2_deg} that the degree of the Grassmannian $\grk$ in its Pl\"ucker embedding is precisely the $k$-th Catalan number $C_k=[H]^{2k}$, and the claim follows.
\end{proof}

We are now finally in a position to prove the main result of this section.

\begin{proof}[Proof of Theorem \ref{thm:pos_genus}]
    Let $\Delta = \partial_a\aA_{n,k,2}=\bigcup_{i=1}^{n}D_i$ be the decomposition of the algebraic boundary of the amplituhedron into irreducible components $D_i\subset\grk$.
    For $I\subset [n]$ we write $D_I$ for the intersection $\bigcap_{i\in I}D_i$.
    We start by considering the following long exact sequence in relative cohomology.
    \[
    \cdots \to H^{2k-1}(\grk)\to H^{2k-1}(\Delta)\to H^{2k}(\grk,\Delta)\to H^{2k}(\grk)\to\cdots.
    \]
    By \cite[Proposition 5.46]{MHS} this is also a long exact sequence of mixed Hodge structures where the appearing maps are morphisms of such structures.
    Since the Hodge structure on the cohomology of the Grassmannian is pure of Tate type, it follows that the $(p,0)$ part of the terms $H^{2k-1}(\grk)$ and $H^{2k}(\grk)$ in the above exact sequence are zero, in turn implying that $h^{p,0}(H^{2k}(\grk,\Delta)=h^{p,0}(H^{2k-1}(\Delta))$ for all $p>0$.
    Now, we proceed by iteratively applying the Mayer--Vietoris long exact sequence to the union and intersections of the $D_i$.
    We consider $D_{\odd j}$ for some odd $j\in [n]$, and let $d$ denote its dimension.
    By convention we set $D_{\odd j}=\grk$ for $j<0$.
    Note the $D_{\odd j}$ are global complete intersections in the Grassmannian by Lemma \ref{lem:Kleimann}.
    Then, let $R_j$ be the union of all $D_i$ with $i\notin\odd j$ intersected with $D_{\odd{j-2}}$, that is $R_j=D_{\odd{j-2}}\cap \bigcup_{i\notin \odd j}D_i$, and $\dim R_j=d$.
    The union $D_{\odd j}\cup R_j$ fits into the Mayer--Vietoris long exact sequence \eqref{eq:MV} as 
    \begin{equation}\label{eq:MV_for_genus_bound}
        H^{d-1}(D_{\odd j}) \oplus H^{d-1}(R_j) \to H^{d-1}(D_{\odd j} \cap R) \to H^{d}(D_{\odd j}\cup R_j) \to H^{d}(D_{\odd j}) \oplus H^{d}(R_j).
    \end{equation}
    Since $D_{\odd j}$ and $R_j$ as divisors in the complete intersection $D_{\odd{j-2}}$ satisfy the assumptions in Corollary \ref{cor:injection}, we get an injection $H^{d-1}(D_{\odd j} \cap R)^{p,0} \hookrightarrow H^{d}(D_{\odd j}\cup R_j)^{p,0}$ and thus the inequality 
\begin{align}\label{eq:dim_inequality}
        h^{p,0}(H^{d-1}(D_{\odd j} \cap R_j))\leq h^{p,0}( H^{d}(D_{\odd j}\cup R_j)).
    \end{align}
    By noting that $D_{\odd j}\cap R_j=D_{\odd{j+2}}\cup R_{j+2}$, which are divisors in $D_{\odd j}$, we can apply Equation \eqref{eq:MV_for_genus_bound} iteratively until $D_{\odd j}$ and $R_j$ become varieties of dimension one.
    Then, $D_{\odd{mk-1}}$ becomes a smooth curve $E$ by Lemma \ref{lem:curve_smooth_irreducible}.
    We get the following Mayer--Vietoris sequence for $E\cup R_{2k-1}$:
    \[
    \ldots\to H^0(E\cap R_{2k-1})\to H^1(E\cup R_{2k-1})\to H^1(E)\oplus H^1(R_{2k-1})\to H^1(E\cap R_{2k-1}) \to\ldots.
    \]
    Note that zeroth-cohomology groups always have pure Hodge structure of weight zero, and the first cohomology group on a union of points vanishes completely, i.e. $H^1(E\cap R_{2k-1})=0$.
    Therefore, we get an isomorphism of vector spaces $H^1(E\cup R_{2k-1})^{p,0}\cong H^1(E)\oplus H^1(R_{2k-1})^{p,0}$. In particular,
    \[
    h^{1,0}(H^1(E\cup R_{2k-1}))\geq h^{1,0}(H^1(E))=g(E)=1+\frac{k-3}{2}C_k,
    \]
    where the last inequality follows from Lemma \ref{lem:genus_residual_curve}.
    Then, since $E\cup R_{2k-1}=D_{\odd{2k-3}}\cap R_{2k-3}$, where $D_{\odd{2k-3}}$ is of dimension two, we get a sequence of inequalities for the Hodge numbers we are interested in:
    \begin{align}
    h^{p,0}(H^{2k-1}(\partial_a\aA_{n,k,2})) \geq \ldots \geq h^{1,0}(H^2(D_{\odd{mk-3}}\cup R_{2k-3})) \geq h^{1,0}(H^1(E\cup R_{2k-1}))\geq g(E)
    \end{align}
    by Equation \eqref{eq:dim_inequality}, and with $g(E)=1+(k-3)C_k/2$.
    Lastly, $h^{p,0}(H^{2k}(\grk,\partial_a\aA_{n,k,2})) = h^{p,0}(H^{2k-1}(\partial_a\aA_{n,k,2}))$ as asserted above. This finishes the proof.
\end{proof}

To conclude the section, we show that curves $E$ stemming from generic intersections of Schubert hyperplane sections, as in Lemma \ref{lem:curve_smooth_irreducible}, are entirely residual, that is to say, they are contained in the residual arrangement $\mathcal{R}(\aA_{n,k,2})$ of the amplituhedron $\aA_{n,k,2}$, see Section \ref{sec:posGeom} for the definition. 

\begin{proposition}\label{prop:residual_curves}
    Let $E$ be a curve as in Lemma \ref{lem:curve_smooth_irreducible}.
    Let $k>1$.
    Then, $E$ is contained in the residual arrangement $\mathcal{R}(\aA_{n,k,2})\subset \grk$ of the amplituhedron $\aA_{n,k,2}$.
    In particular, the curve $E$ does not meet the amplituhedron.
\end{proposition}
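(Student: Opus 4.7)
The plan is to reduce the proposition to the single concrete claim $E \cap \aA_{n,k,2} = \varnothing$, from which the residual-arrangement containment follows formally. First I would observe that, since $k > 1$, we have $|I| = 2k-1 \geq 3$, so every point of $E$ lies on at least two distinct irreducible components $D_i$ of $\partial_a \aA_{n,k,2}$ (distinct for generic $Z$); hence $E \subset \sing(\partial_a \aA_{n,k,2})$. Combined with $E \cap \partial \aA_{n,k,2} \subset E \cap \aA_{n,k,2} = \varnothing$, this places $E$ inside the residual arrangement $\mathcal{R}(\aA_{n,k,2})$.

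The substantive step is to prove $E \cap \aA_{n,k,2} = \varnothing$ by a Pl\"ucker-positivity argument. Assume for contradiction $Y \in E \cap \aA_{n,k,2}$ and write $Y = VZ$ for some $V \in \gr(k,n)_{\geq 0}$. For each $i \in I$, the condition $Y \in D_i$ reads $\langle Y Z_i Z_{i+1}\rangle = 0$. I would expand this twistor via Cauchy--Binet as
\[
\langle Y Z_i Z_{i+1}\rangle = \sum_{J' \in \binom{[n]\setminus\{i,i+1\}}{k}} p_{J'}(V) \cdot \det Z_{J' \cup \{i,i+1\}},
\]
and verify that all the signs in this expansion are uniformly positive, using that $i$ and $i+1$ occupy adjacent positions in any sorted index set. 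Since $\det Z_{J' \cup \{i,i+1\}} > 0$ by the positivity of $Z$ and $p_{J'}(V) \geq 0$ by the positivity of $V$, vanishing of this nonnegative sum forces $p_{J'}(V) = 0$ for every $J' \in \binom{[n]\setminus\{i,i+1\}}{k}$.

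Aggregating over $i \in I$, I would conclude that $p_{J'}(V)$ can be nonzero only if $J'$ meets every pair $\{i, i+1\}$ for $i \in I$. By the hypothesis on $E$ these pairs are pairwise disjoint (the lines $L_i$ for $i \in I$ are pairwise skew, and the bound $n \geq 2(2k-1)$ ensures there is room for them inside $[n]$), so a set intersecting all of them must have at least $2k-1$ elements. Since $|J'| = k < 2k-1$ for $k > 1$, pigeonhole gives $p_{J'}(V) = 0$ for every $J' \in \binom{[n]}{k}$, contradicting $V \in \gr(k,n)$.

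The main obstacle, in my view, is the sign check inside the Cauchy--Binet expansion: one must verify that all terms enter with the same positive sign, so that vanishing of the twistor really does force coordinatewise vanishing of the Pl\"ucker coordinates of $V$. This is a routine but essential Laplace-expansion computation that relies crucially on $i$ and $i+1$ being consecutive. Once it is settled, the remaining pigeonhole argument and the residual-arrangement bookkeeping are purely formal.
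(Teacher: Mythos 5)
Your core computation is correct, and it is a genuinely different route from the paper's. You prove $E\cap\aA_{n,k,2}=\varnothing$ directly: writing $Y=\tilde{Z}(V)$ with $V\in\gr(k,n)_{\geq 0}$ and expanding $\langle YZ_iZ_{i+1}\rangle$ by Cauchy--Binet gives $\sum_{J} p_J(V)\det Z_{J\cup\{i,i+1\}}$ with a sign that is uniform in $J$ (it is $+1$ for $1\leq i<n$; for the wrap-around pair $\{n,1\}$ it is still uniform, possibly $-1$, which is all you need), so vanishing of the twistor forces $p_J(V)=0$ for every $J$ disjoint from $\{i,i+1\}$; since the $2k-1$ pairs indexed by $I$ are pairwise disjoint and $k<2k-1$, all Pl\"ucker coordinates of $V$ vanish, a contradiction. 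The paper instead transports everything to $\gr(2,n)$ via the twistor embedding and invokes the combinatorial face-structure results of \cite{lam_face_structure}: it identifies $E$ with the stratum of a rank-two positroid $\mathcal N$, computes $e(\mathcal N)=-k+1<0$, and concludes from Theorem 16 and Proposition 38 of that reference that the stratum misses the amplituhedron and is residual. Your argument is more elementary and self-contained for the disjointness claim, at the cost of the (routine but genuinely necessary) sign verification you flag.

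However, the final step---from ``$E\subset\sing(\partial_a\aA_{n,k,2})$ and $E\cap\partial\aA_{n,k,2}=\varnothing$'' to ``$E\subset\mathcal{R}(\aA_{n,k,2})$''---is a gap under the definition recalled in Section \ref{sec:posGeom}: the residual arrangement is the union of those \emph{irreducible components} of $\sing(\partial_a\aA_{n,k,2})$ which do not intersect $\partial\aA_{n,k,2}$. Since $E$ is irreducible, $E\subset\mathcal{R}(\aA_{n,k,2})$ requires exhibiting a component of the singular locus that contains $E$ and avoids the Euclidean boundary. But $E$ is not itself such a component: it sits inside the $(2k-2)$-dimensional intersections $D_i\cap D_j$ for $i,j\in I$, which for $k\geq 2$ are higher-dimensional subsets of the singular locus, and these two-fold intersections are in general not disjoint from $\partial\aA_{n,k,2}$ (the corresponding rank-two matroid with two disjoint intervals has $e(\mathcal N)=k-2\geq 0$, so it is not excluded by \cite[Theorem 16]{lam_face_structure}). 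Hence knowing that $E$ itself is singular and avoids the boundary does not, by itself, place $E$ in $\mathcal{R}(\aA_{n,k,2})$ as literally defined. To close this, you would either have to argue via the stratum-by-stratum characterization of the residual arrangement in \cite[Proposition 38]{lam_face_structure}---which is exactly what the paper's proof does, verifying its hypotheses for the stratum corresponding to $E$---or identify an irreducible component of $\sing(\partial_a\aA_{n,k,2})$ containing $E$ and disjoint from $\partial\aA_{n,k,2}$. So the disjointness statement you prove is the substantive content, but the residual-arrangement bookkeeping is not ``purely formal'' as claimed.
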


To prove this statement we make use of the results in \cite{lam_face_structure}, giving a combinatorial characterization of the boundary face structure of the $m=2$ amplituhedron.
We define the \emph{twistor embedding}
\[
    T_Z:\gr(k,k+m)\dashrightarrow\gr(m,n),\quad X\mapsto  \langle Xi_1\ldots i_m\rangle,
\]
where $X\in\grk$ is given as the rowspan of a $k\times(k+2)$ matrix, and $\langle Xi_1\ldots i_m \rangle$ are twistor coordinates, as defined in Section \ref{sec:prelim}.
By \cite[Lemma 1]{lam_face_structure} the map $T_Z$ is indeed an embedding.
The face stratification of $\aA_{n,k,2}$ can be studied in this embedding and \cite[Theorem 25]{lam_face_structure} shows that it is a subposet of the positroid stratification of $\gr(2,n)$ (see e.g. \cite[Section 3]{lam_face_structure} for a definition).
By these results, we are left with studying rank two positroids $\mathcal N$ on $[n]$, which we describe as follows.
Let $L \subset [n]$ denote the set of all loops of $\mathcal{N}$.
Also, consider a collection of disjoint cyclic intervals $[a_1,b_1], \ldots, [a_r,b_r]$, such that $\operatorname{rank}([a_i,b_i]) = 1$. 
In other words, all elements in $[a_i,b_i]$, that are not loops, are parallel. 
We always assume in this notation that $r$ is taken to be minimal and each $[a_i,b_i]$ is taken as small as possible. 
Then, we write for a rank two matroid $\mathcal{N}$ on $[n]$ 
\[
    \mathcal N = \big(L,\{[a_1,b_1],\ldots,[a_r,b_r]\}\big).
\] 
Then, we also set
\begin{align}\label{eq:S_matroid}
    S = S(\mathcal N) = \bigcup_{i=1}^r [a_i,b_i],
\end{align}
and define
\begin{align}\label{eq:other_matroid_numbers}
    d(\mathcal N) = 2k + r - |S \setminus L| - 2|L|,\quad
    c(\mathcal N) = 2k - d(N),\quad\text{and}\quad
    e(\mathcal N) = r + k - |S \cup L|.
\end{align}
Finally, we also denote the subset of rank two matroids $\mathcal{N}$ on $[n]$, such that $e(\mathcal N)\geq 0$ by $P_{n,k}$.
This is in alignment with \cite[Definition 13]{lam_face_structure} and \cite[Theorem 16]{lam_face_structure}.
We can now proceed with proving the above proposition.

\begin{proof}[Proof of Proposition \ref{prop:residual_curves}]
    The stratification of the boundary of the amplituhedron $T_Z(\aA_{n,k,2})\subset \gr(2,n)$ in its twistor embedding for $m=2$ was completely characterized in \cite{lam_face_structure}.
    More precisely, its residual arrangement is given in \cite[Proposition 38]{lam_face_structure}.
    We proceed by translating the matroidal language used in this reference to our setting, and then show that the conditions of \cite[Proposition 38]{lam_face_structure} are met.
    By assumption $E$ is given as a complete intersection of Schubert hyperplane sections, stemming from Chow hypersurfaces $D_i=\CH(L_i)$ of skew lines $L_i$ in general position for $i\in I$.
    Any such divisor $D_i$ corresponds to a positroid variety $\Pi_{ii+1}$ of codimension one, given by the vanishing of the Pl\"ucker coordinate $p_{ii+1}$ on $\gr(2,n)$ corresponding to $D_i$ under the twistor embedding $T_Z$.
    More succinctly, $T_Z(D_i)= \Pi_{ii+1}\cap \im T_Z\subset \gr(2,n)$.
    Therefore, the matroid $\mathcal{N}$ of rank two on $[n]$, characterizing $\Pi_{ii+1}$, is given by $L=\varnothing\subset [n]$ and $\{[i,i+1]\}$.
    In general then, the matroid $\mathcal{N}$ of rank two on $[n]$, characterizing $E=\Pi_\mathcal N$, is given by $L=\varnothing\subset [n]$, and 
    \[
    A = \{[i,i+1]\mid i\in I\},
    \]
    where $[i,i+1]=\{i,i+1\}$ read cyclically in $[n]$.
    It follows from Equation \eqref{eq:S_matroid}, that $S=\bigcup_{i\in I}[i,i+1]$ with $|S|=2(2k-1)$, as well as $r=|A|=2k-1$.
    Similarly, by Equation \eqref{eq:other_matroid_numbers}, we then get $e(\mathcal N)=r+k-|S\cup L|=2k-1+k-2(2k-1)=-k+1$, and therefore, by \cite[Theorem 16]{lam_face_structure}, the matroid $\mathcal{N}$ is not in $\mathcal{P}_{k,n,2}$ for $k>1$.
    The other conditions in \cite[Proposition 38]{lam_face_structure}, are all satisfied for $E=\Pi_{\mathcal N}$, since $c(\mathcal{N})=2k-(2k+(2k-1)-2(2k-1))=2k-1$ is less than $2k$, and $k-|L|=k\geq 0$, and $d_i=|\{i,i+1\}\setminus L|-1=1<k$ for all $i\leq r=2k-1$.
    This shows the claim.
\end{proof}

\section{The $m=4$ amplituhedra make positive genus pairs}
\label{sec:m=4}

In this section we record analogous results for the genus of the amplituhedron and its boundary divisor as in Section \ref{sec:m=2}, but for the case $m=4$. Since all of the results are derived analogously we only point out the differences in the proofs. The main result is as follows.

\begin{theorem}\label{thm:genus_m=4}
    Let $k\geq 3$ and $n$ be nonnegative integers, such that $n\geq 4(4k-1)$.
    Further let $Z$ be generic and have positive maximal minors, 
    and $\partial_a\aA_{n,k,4}(Z)\subset\gr(k,k+4)$ be the algebraic boundary of the amplituhedron.
    Then, the genus of the pair $(\gr(k,k+4),\partial_a\aA_{n,k,4}(Z))$ is at least $1+\frac{3k-5}{2}\deg(\gr(k,k+4))$, where the degree of the Grassmannian $\gr(k,k+4)$ is 
    \[
        \deg(\gr(k,k+4))=4k! \cdot 
        \frac{0! \cdot 1! \cdot \ldots \cdot (k-1)!}{4! \cdot 5! \cdot \ldots \cdot (k+3)!}.
    \]
\end{theorem}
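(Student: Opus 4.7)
The plan is to adapt verbatim the strategy of Theorem \ref{thm:pos_genus}, substituting the $m=4$ boundary description from Proposition \ref{prop:m=4_alg_boundary}. The irreducible components of $\partial_a\aA_{n,k,4}$ are the Chow hypersurfaces $\CH(\Pi_{(i,j)})$ of the 3-planes $\Pi_{(i,j)}=\operatorname{span}(Z_i,Z_{i+1},Z_j,Z_{j+1})\subset\PP^{k+3}$, and each such Chow hypersurface is again a hyperplane section of $\gr(k,k+4)$ in its Plücker embedding. Consequently the Schubert-type cohomological tools Lemma \ref{lem:CI_Hodge}, Corollary \ref{cor:schubert_divisors_cohomology}, and Corollary \ref{cor:injection} transfer to this setting without modification, and the entire argument reduces to producing an analog of the residual curve $E$ and re-running the same Mayer--Vietoris descent.

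First, I would construct a smooth irreducible curve $E\subset\gr(k,k+4)$ as a transverse complete intersection of $4k-1$ boundary divisors. The bound $n\geq 4(4k-1)$ is chosen precisely so that one can pick $4k-1$ compatible pairs $(i_\ell,j_\ell)$ whose index 4-tuples $\{i_\ell,i_\ell+1,j_\ell,j_\ell+1\}$ are pairwise disjoint; genericity of $Z$ then places the 3-planes $\Pi_\ell$ in general linear position in $\PP^{k+3}$, and Kleiman's theorem (as in Lemma \ref{lem:Kleimann}) yields transversality, so $E=\bigcap_\ell\CH(\Pi_\ell)$ has dimension $4k-(4k-1)=1$. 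Smoothness of $E$ follows from the same codimension count as in Lemma \ref{lem:curve_smooth_irreducible}: for a fixed 3-plane $\Pi\subset\C^{k+4}$, the singular locus of $\CH(\Pi)$ is the Schubert subvariety $\{\Lambda:\dim(\Lambda\cap\Pi)\geq 2\}$ of codimension $4$ in $\gr(k,k+4)$, so its transverse intersection with the remaining $4k-2$ hyperplane sections has negative expected dimension $4k-4-(4k-2)=-2$ and is therefore empty. Connectedness, and hence irreducibility, of $E$ follows from \cite[Theorem 1.1]{martinelli2017connectedness} applied in the Plücker embedding.

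Next, I would compute $g(E)$ by adjunction as in Lemma \ref{lem:genus_residual_curve}. Since every $\CH(\Pi_\ell)$ extends to a hyperplane in the Plücker embedding, the normal bundle decomposes as $N_E\gr(k,k+4)\cong\bigoplus_{\ell=1}^{4k-1}\mathcal{O}_E(1)$, and combined with $\omega_{\gr(k,k+4)}\cong\mathcal{O}(-(k+4))$ the adjunction formula gives $\omega_E\cong\mathcal{O}_E(3k-5)$. Riemann--Roch then yields $2g(E)-2=(3k-5)[H]^{4k}=(3k-5)\deg\gr(k,k+4)$, matching the claimed lower bound.

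Finally, I would run the same Mayer--Vietoris descent as in the proof of Theorem \ref{thm:pos_genus}. The long exact sequence of relative cohomology applied to the pair $(\gr(k,k+4),\partial_a\aA_{n,k,4})$, together with the Tate purity of $H^\bullet(\gr(k,k+4))$, gives $h^{p,0}(H^{4k}(\gr(k,k+4),\partial_a\aA_{n,k,4}))=h^{p,0}(H^{4k-1}(\partial_a\aA_{n,k,4}))$ for $p>0$. Ordering the descent so that at each stage one peels off one of the distinguished divisors $\CH(\Pi_\ell)$ from the current intersection, each intermediate intersection remains a global complete intersection in $\gr(k,k+4)$ by disjointness of the 4-tuples; Corollaries \ref{cor:schubert_divisors_cohomology} and \ref{cor:injection} then produce a chain of inequalities on Hodge numbers terminating in $h^{1,0}(H^1(E))=g(E)$. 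The main obstacle here is purely bookkeeping: unlike the $m=2$ case, where the alternating odd-index labeling is naturally adapted to the cyclic structure, here one must organize the iterated Mayer--Vietoris around the chosen family of disjoint 4-tuples to ensure every intermediate intersection stays a complete intersection in the Grassmannian, so that the Tate purity arguments of Section \ref{sec:m=2} continue to apply at each step.
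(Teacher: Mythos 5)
Your proposal follows essentially the same route as the paper: you construct the residual curve $E$ from $4k-1$ boundary divisors with pairwise disjoint index $4$-tuples via Kleiman transversality, rule out singular points by the same codimension count (the singular locus of $\CH(V_{ij})$, where $k$-planes meet the $4$-plane in dimension at least two, has codimension $4$, giving expected dimension $-2$ after intersecting with the remaining $4k-2$ hyperplane sections), compute $g(E)$ by adjunction and Riemann--Roch from $\omega_E\cong\mathcal{O}(3k-5)$, and then rerun the Mayer--Vietoris descent of Theorem \ref{thm:pos_genus} using the Tate-purity results, exactly as in Lemmas \ref{lem:m=4_Kleimann}, \ref{lem:m=4_genus_residual_curve} and the paper's proof of Theorem \ref{thm:genus_m=4}. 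The argument is correct as stated.
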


For the remainder of this section, we will denote the irreducible components of the boundary divisor of $\aA_{n,k,4}$ by $D_{ij}=D_{ii+1jj+1}=\langle Y i (i+1)j(j+1)\rangle\subset\gr(k,k+4)$ with $Y\in \gr(k,k+4)$, see \cite{amplituhedronbcfwtriangulation} and also Proposition \ref{prop:m=4_alg_boundary}.
Furthermore, we define the 4-plane $V_{ij}=V_{ii+1jj+1}=\operatorname{span}_{\C}(Z_i,Z_{i+1},Z_j,Z_{j+1})\subset \C^{k+4}$, where $Z_k$ are the rows of the $Z$ matrix. In particular, we then have $D_{ij}=\CH(V_{ij})$.

\begin{remark}
    The choice of $n\geq 4(4k-1)$ in Theorem \ref{thm:genus_m=4} is simply to ensure that we can pick $4k-1$ many $4$-planes $V_{ij}$ that are pairwise disjoint.
    The reason for the lower bound on the genus of the pair is a higher genus curve appearing in the intersection of boundary divisors, analogously to the main result in Section $\ref{sec:m=2}$.
    Moreover, similarly to Proposition \ref{prop:residual_curves}, we believe that this curve is in the residual arrangement of $\aA_{n,k,4}$, however since the tools used in that proposition are unavailable for the case $m=4$, it is beyond the scope of this article to prove this statement.
\end{remark}

In order to prove Theorem \ref{thm:genus_m=4}, we establish the analogous lemmas as in Section \ref{sec:m=2} that helped us prove its main result.

\begin{lemma}\label{lem:m=4_Kleimann}
    Let $\mathcal{I}$ be a collection of sets $I=\{i,i+1,j,j+1\}$ such that $I\cap I'=\varnothing$ for all $I,I'\in \mathcal{I}$.
    Then, the intersection $D_\mathcal{I}=\bigcap_{I\in \mathcal{I}}D_I$ is a global complete transversal intersection in $\gr(k,k+4)$.
    Geometrically speaking, for a set of pairwise disjoint $4$-planes $V_{ij}$ the intersection of $D_{I}$ is transversal and a global complete intersection.
    In particular, if $|\mathcal{I}|=4k-1$, then $D_\mathcal{I}$ is a smooth irreducible and reduced curve.
\end{lemma}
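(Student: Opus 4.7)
The plan is to mirror the strategy of Lemmas \ref{lem:Kleimann} and \ref{lem:curve_smooth_irreducible}, swapping the $m=2$ incidence variety and dimension computations for their $m=4$ analogs.

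For the first assertion I would apply Kleiman's transversality theorem to the action of $\operatorname{GL}_{\C}(k+4)$ on $\gr(k,k+4)$. Each $D_I$ is a hyperplane section of $\gr(k,k+4)$ in the Pl\"ucker embedding, and by the genericity of $Z$ together with the pairwise disjointness of the index sets $I\in\mathcal{I}$, the $4$-planes $V_I$ sit in general position. Kleiman's theorem \cite[Theorem 1.3]{3264andallthat} then gives generic transversality, so $D_\mathcal{I}$ is a global complete intersection of hyperplane sections. This step is essentially verbatim the argument of Lemma \ref{lem:Kleimann}.

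For the smoothness claim with $|\mathcal{I}|=4k-1$, the dimension count first gives $\dim D_\mathcal{I}=4k-(4k-1)=1$. To locate $\sing D_I$, I would introduce the incidence variety
\[
\Sigma_I=\{(p,[\Lambda])\in\PP(V_I)\times\gr(k,k+4)\,:\,p\in\PP(\Lambda)\},
\]
which is smooth, being a $\gr(k-1,k+3)$-bundle over $\PP^3$ of dimension $3+4(k-1)=4k-1$. The projection $\Sigma_I\to D_I$ is generically one-to-one, and by the same application of \cite[Corollary 14.10]{Harris_AG} used in Lemma \ref{lem:curve_smooth_irreducible}, a point $[\Lambda]\in D_I$ is smooth if and only if $\dim(\Lambda\cap V_I)=1$. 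Hence $\sing D_I$ is the locus where $\dim(\Lambda\cap V_I)\geq 2$; parameterizing such $[\Lambda]$ by a pair $(W,\Lambda)$ with $W\subset V_I\cap\Lambda$ of dimension $2$ yields
\[
\dim\sing D_I=\dim\gr(2,4)+\dim\gr(k-2,k+2)=4+4(k-2)=4k-4,
\]
i.e.\ codimension $3$ in $D_I$, exactly as in the $m=2$ case.

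The remainder of the argument then closes as in Lemma \ref{lem:curve_smooth_irreducible}. By genericity of $Z$, the intersection of $\sing D_I$ with the remaining $4k-2$ divisors $D_{I'}$ is generically transversal, and its expected dimension $(4k-4)-(4k-2)=-2$ is negative, so it is empty. The analog of \eqref{eq:singular_locus_E} then shows the curve is smooth, hence reduced. Irreducibility reduces to connectedness, which I would obtain by applying \cite[Theorem 1.1]{martinelli2017connectedness} to the Pl\"ucker embedding of $\gr(k,k+4)$. The one non-routine step, and the main obstacle, is the identification and dimension computation of $\sing D_I$ for the $m=4$ Chow hypersurface; the key input there is the smoothness of $\Sigma_I$ as a $\gr(k-1,k+3)$-bundle over $\PP^3$ combined with the birationality of $\Sigma_I\to D_I$.
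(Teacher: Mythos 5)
Your proposal is correct and follows essentially the same route as the paper, which proves the lemma by citing the $m=2$ arguments (Lemmas \ref{lem:Kleimann} and \ref{lem:curve_smooth_irreducible}) verbatim and only noting that $\sing D_{ij}$ is the locus of $k$-planes meeting $V_{ij}$ in dimension at least two; your dimension counts ($\dim\sing D_I=4k-4$, codimension $3$, expected dimension $-2$ after intersecting with the remaining $4k-2$ divisors) correctly flesh out exactly the details the paper leaves implicit.
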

\begin{proof}
    For $|\mathcal{I}|<4k-1$ the proof is identical to that of Lemma \ref{lem:Kleimann} with $\gr(k,k+4)$ replacing the Grassmannian $\grk$, and $Z$ being generic enough with respect to the planes $V_{ij}$, instead of the lines $L_i$.
    For $|\mathcal{I}|=4k-1$ the proof of Lemma \ref{lem:curve_smooth_irreducible} applies, upon noting that the singular locus of $D_{ij}$ is the subset of $D_{ij}\subset\gr(k,k+4)$ given by $k$-planes meeting $V_{ij}$ in dimension at least two.
\end{proof}

\begin{lemma}\label{lem:m=4_genus_residual_curve}
    Let $\mathcal{I}$ be a collection of $4k-1$ sets $I=\{i,i+1,j,j+1\}$ such that $I\cap I'=\varnothing$ for all $I,I'\in \mathcal{I}$.
    Then, let $E$ be a curve obtained from the $(4k-1)$-fold intersection $D_{\mathcal{I}}=\bigcap_{I\in \mathcal{I}}D_I$ of boundary divisors $D_{ij}$.
    Then, for $k\geq 2$ the (geometric) genus of $E$  can be computed as 
    \[
    g(E)=1+\frac{3k-5}{2}\deg(\gr(k,k+4)).
    \]
    For $k=1$ the genus of the curve $E$ is zero.
\end{lemma}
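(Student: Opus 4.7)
The plan is to mirror the argument for Lemma \ref{lem:genus_residual_curve} almost verbatim, replacing the Chow hypersurfaces of lines in $\gr(k,k+2)$ by the boundary divisors $D_{ij}=\CH(V_{ij})$ of hyperplane sections in $\gr(k,k+4)$, and using adjunction on the resulting complete intersection curve. The essential ingredient is that each divisor $D_{ij}$ is cut out by a twistor coordinate, which is linear in the Plücker coordinates of $\gr(k,k+4)$, so that every $D_{ij}$ extends to a hyperplane section of the Grassmannian in its Plücker embedding.

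First I would invoke Lemma \ref{lem:m=4_Kleimann} to conclude that $E=D_{\mathcal{I}}$ is a smooth, irreducible, reduced complete intersection curve in $\gr(k,k+4)$, cut out by the $4k-1$ hyperplane sections $D_I$ with $I\in\mathcal{I}$. By \cite[B.7.4]{Fulton1998}, the normal bundle decomposes as
\[
N_E\gr(k,k+4)\cong\bigoplus_{I\in\mathcal{I}}N_{D_I}\gr(k,k+4)\bigl|_E,
\]
and since every $D_I$ is a hyperplane section of $\gr(k,k+4)$, we may identify each summand with $\mathcal{O}_{\gr(k,k+4)}(1)|_E$. Taking the top exterior power yields $\wedge^{4k-1}N_E\gr(k,k+4)\cong\mathcal{O}_{\gr(k,k+4)}(4k-1)|_E$.

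Next I would apply adjunction \cite[Proposition II.8.20]{Hartshorne1977}: $\omega_E\cong\omega_{\gr(k,k+4)}\otimes\wedge^{4k-1}N_E\gr(k,k+4)$. The canonical bundle of $\gr(k,k+4)$ in its Plücker embedding is $\mathcal{O}_{\gr(k,k+4)}(-(k+4))$ by \cite[Section 5.7.4]{3264andallthat}, so
\[
\omega_E\cong\mathcal{O}_{\gr(k,k+4)}(-(k+4))\otimes\mathcal{O}_{\gr(k,k+4)}(4k-1)\bigl|_E\cong\mathcal{O}_{\gr(k,k+4)}(3k-5)\bigl|_E.
\]
Denoting by $[H]$ the hyperplane class in the Chow ring $A^\bullet(\gr(k,k+4))$, Riemann--Roch \cite[Theorem 3.23]{shafarevich} gives $2g(E)-2=\deg_E\omega_E=(3k-5)[H]\cdot[E]$. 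Since $E$ is a complete intersection of $4k-1$ hyperplane sections, $[E]=[H]^{4k-1}$, and $[H]^{4k}=\deg\gr(k,k+4)$ is the degree of the Grassmannian in its Plücker embedding, whose value is recalled in \eqref{eq:Grkn_deg}. This yields $g(E)=1+\frac{3k-5}{2}\deg(\gr(k,k+4))$.

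For the base case $k=1$ I would simply observe that $\gr(1,5)\cong\PP^4$ and $E$ is an intersection of $4k-1=3$ hyperplanes, hence a line, of genus zero (this also agrees with plugging $k=1$ into the formula, since $\deg\gr(1,5)=1$ and $3k-5=-2$). The only nontrivial bookkeeping step is verifying that the proof of Lemma \ref{lem:genus_residual_curve} transfers cleanly, since the divisors $D_{ij}$ are Chow hypersurfaces of $4$-planes rather than of lines; but because $D_{ij}$ is still a hyperplane section of $\gr(k,k+4)$ (being the vanishing of a single twistor coordinate), every step of the $m=2$ argument carries over with no obstacle beyond substituting $k+4$ for $k+2$ and $4k-1$ for $2k-1$.
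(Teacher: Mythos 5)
Your proposal is correct and follows essentially the same route as the paper, which simply notes that the proof of Lemma \ref{lem:genus_residual_curve} carries over with $\omega_E\cong\mathcal{O}_{\gr(k,k+4)}(-k-4)\otimes\mathcal{O}_{\gr(k,k+4)}(4k-1)\cong\mathcal{O}_{\gr(k,k+4)}(3k-5)$ and the degree of $\gr(k,k+4)$ from \eqref{eq:Grkn_deg}, followed by Riemann--Roch. Your explicit treatment of the normal bundle decomposition, the identification of each $D_{ij}$ as a hyperplane section via a twistor coordinate, and the $k=1$ base case in $\gr(1,5)\cong\PP^4$ matches the intended argument.
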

\begin{proof}
    The proof is identical to that of Lemma \ref{lem:genus_residual_curve} upon noting the following.
    Firstly, the canonical divisor of the curve $E$ obtained from $4k-1$ intersections of Schubert hyperplane sections, is given as $\omega_E\cong \mathcal{O}_{\gr(k,k+4}(-k-4)\otimes \mathcal{O}_{\gr(k,k+4)}(4k-1)\cong \mathcal{O}_{\gr(k,k+4)}(3k-5)$, as in \eqref{eq:can_div_curve}.
    Secondly, the degree of $\gr(k,k+4)$ now given by \eqref{eq:Grkn_deg} for $n=k+4$.
    Then, applying the Riemann--Roch theorem we obtain the result.
\end{proof}

We are now in a position to prove the main result of this section.

\begin{proof}[Proof of Theorem \ref{thm:genus_m=4}]
    Note that Lemmas \ref{lem:open_lci}, \ref{lem:CI_Hodge}, and Corollary \ref{cor:schubert_divisors_cohomology} apply to any Grassmannian and union of Schubert hyperplane sections.
    Therefore, in particular, they apply to $D=\partial_a\aA_{n,k,4}\subset\gr(k,k+4)$.
    Hence, the proof of Theorem \ref{thm:pos_genus} of applies here, where we replace $D_{\odd{j}}$ for any odd $j\in [n]$ with an appropriate subset of $\mathcal{I}$, defined as in Lemma \ref{lem:m=4_genus_residual_curve}.
\end{proof}

\section{A genus one positive geometry}
\label{sec:genus_one_PG}

In this section we present an explicit example of a semi-algebraic set $P \subset \mathbb{P}^3$ such that $(\mathbb{P}^3, P)$ is a positive geometry in the sense of \cite{pos_geom_2017}, but the pair $(\mathbb{P}^3, \partial_a P)$ has genus one. This demonstrates that being a genus zero pair is not a necessary condition to be a positive geometry, and that the amplituhedron might still be one. The code to reproduce the computations of this sections is available at \cite{zenodo}.
Our example $P$, seen in Figure \ref{fig:3D_DelPezzo}, is combinatorially a full dimensional cube in $\PP^3$, with one of its linear facets replaced by a cubic hypersurface in such a way that the edges are preserved.
In analogy to the amplituhedron, as discussed in Section \ref{sec:m=2}, $P$ has a genus one curve in its residual arrangement.

\begin{figure}[htbp]
    \centering

    \begin{subfigure}[b]{0.48\textwidth}
        \centering
        \includegraphics[width=\linewidth]{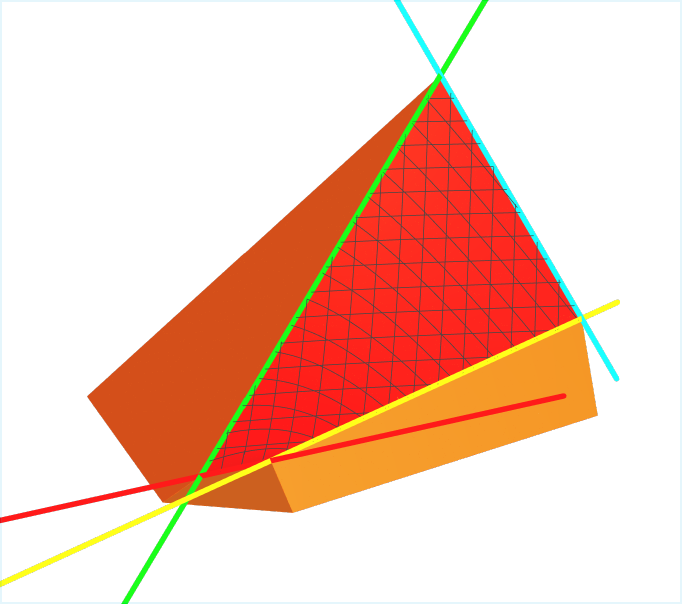}
        \caption{Semi-algebraic set $P$, with a curvy quadrilateral facet on $S$ in red and its boundary lines.}
        \label{fig:3D_DelPezzo}
    \end{subfigure}
    \hfill
    \begin{subfigure}[b]{0.48\textwidth}
        \centering
        \includegraphics[width=\linewidth]{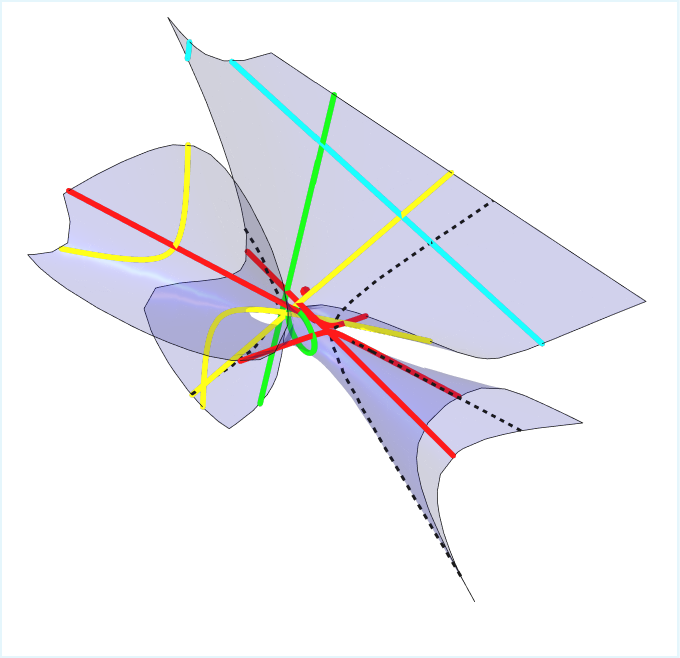}
        \vspace{0cm}
        \caption{Del Pezzo surface $S$ in blue, the quadrilateral face of $P$, bounded by four lines, and the residual elliptic curve in black (dashed).}
        \label{fig:surface}
    \end{subfigure}

    \vspace{1em}

    \begin{subfigure}[t]{0.48\textwidth}
        \centering
        \includegraphics[width=\linewidth]{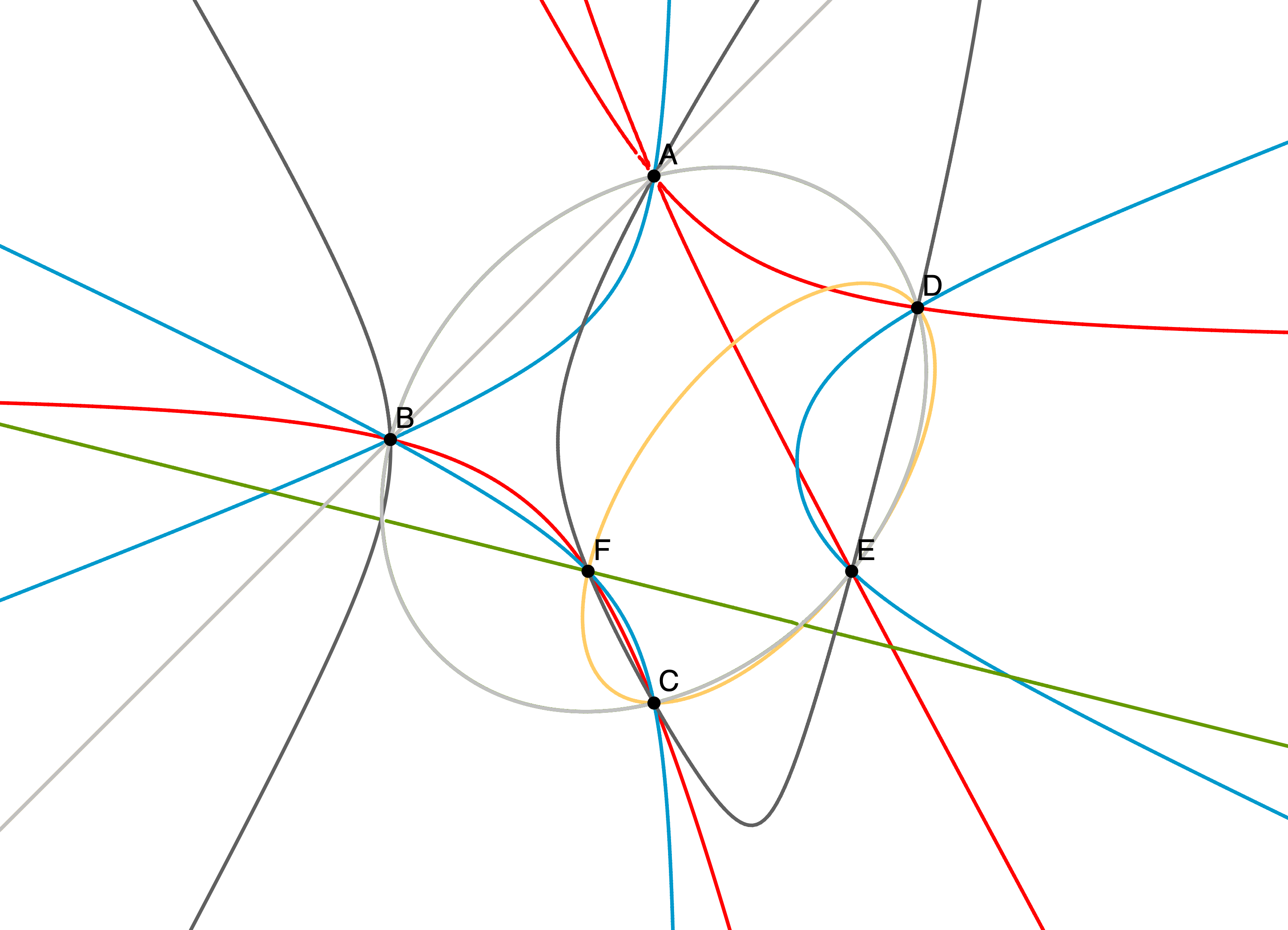}
        \caption{Six points $A$ through $F$ in $\PP^2$ and five cubic curves interpolating these points.}
        \label{fig:delPezzo}
    \end{subfigure}

    \caption{Example of a positive geometry with genus one.}
    \label{fig:all_DelPezzo}
\end{figure}

We start by describing the six facets of $P$.
There are five linear facets contained in the hyperplanes denoted by $L_0,\ldots,L_4$.
The remaining facet of $P$ is contained in a cubic Del Pezzo surface $S$ obtained from a blow-up of six real points in the plane.
The four linear boundary divisors $L_1,\ldots,L_4$ intersect the surface $S$ in four lines, which bound a quadrilateral face of $P$ on $S$, and in four residual conics.
This can be seen in Figure \ref{fig:surface}, where $S$ is the blue-shaded surface, and the quadrilateral is the region with a red point inside.
The semi-algebraic set $P$ has eight more edges: four come from intersecting the facet contained in $L_0$ with the other four linear facets, and the remaining four are the intersections $L_2\cap L_4$, $L_1\cap L_3$, $L_1\cap L_4$ and $L_2\cap L_3$.

As the Del Pezzo surface $S$ is the blow up of $\mathbb{P}^2$ in six points, the combinatorial structure of the semi-algebraic set $P$ can be read off from a plane picture (see Figure \ref{fig:delPezzo}), and we now discuss this in detail.
We pick six points $A$, $B$, $C$, $D$, $E$, and $F$ in $\PP^2$ in generic position with coordinates
\begin{align}
    \begin{array}{lll}
        A = [0:1:1],& B= [-1:0:1], & C=[0:-1:1],\\
        D = [1: 1/2:1],& E = [3/4:-1/2:1],&  F = [-1/4: -1/2:1].
    \end{array}
\end{align}
The blow-up of $\PP^2$ in these six points is a smooth irreducible cubic Del Pezzo surface $S\subset \PP^3$.
This $\PP^3$ is dual to the space of homogeneous cubics vanishing at these six real points. Explicitly, we choose a basis  $c_0,\ldots,c_3\in \C[x,y,z]_3$ with
\begin{align*}
    c_0&=-16 x^{3} - 48 x^{2}y + 118 x y^{2} - 16 x^{2}z + 53 x y z, \\
    c_1&=-32 x^{3} + 10 x^{2}y - 188 x y^{2} + 21 x^{2}z + 53 x z^{2}, \\
    c_2&=84 x^{3} - 172 x^{2}y - 487 x y^{2} - 106 y^{3} + 84 x^{2}z + 106 y z^{2}, \\
    c_3&=-108 x^{3} - 6 x^{2}y + 717 x y^{2} - 161 x^{2}z + 106 x y z - 53 y^{2}z + 53 z^{3}.
\end{align*}
We write $u_0,\ldots,u_3$ for homogeneous coordinates of this $\PP^3$.
Then, the Del Pezzo surface $S$ is the vanishing set of
\begin{align}
    &540u_0^3-381u_0^2u_1+19u_0u_1^2+58u_1^3-402u_0^2u_2+94u_0u_1u_2+27u_1^2u_2-188u_0u_2^2\\
    &-118u_1u_2^2-879u_0^2u_3+84u_0u_1u_3+32u_1^2u_
       3-106u_1u_2u_3+212u_0u_3^2.
\end{align}
In the following we will freely identify a hypersurface with its defining equation.
For the five linear boundary divisors we choose
\begin{align}
    L_0 &= 300u_0 - 900u_1 - 100u_2 - 691u_3,\\
    L_1 &= 267u_0 + 38u_1 + 38u_2 - 76u_3,\\
    L_2 &= 6u_0 + u_1 + 11u_3,\\
    L_3 &= 27u_0 - 8u_1 - 32u_2 - 36u_3,\\
    L_4 &= -257u_0 + 359u_1 - 248u_2 + 16u_3.
\end{align}
If we let $U_3\subset \PP^3$ be the affine open corresponding to points where $u_3\neq 0$, then $P$ is the set of points $q\in U_3$ for which $S(q)\geq 0$ and  $L_i(q)\geq 0$ for $i=0,\ldots,4$, see Figure \ref{fig:3D_DelPezzo}.
We now relate Figure \ref{fig:surface} to Figure \ref{fig:delPezzo} by blowing down to $\PP^2$.
The hyperplanes $L_1$ and $L_2$ correspond to the red and blue cubics with nodes in $A$ and $B$ in Figure \ref{fig:delPezzo}, respectively.
The boundary divisor $L_3$ blows down to the green cubic containing the conic through the points $A,\ldots,E$ and the line through $F$.
The hyperplane $L_4$ corresponds to the reducible cubic in yellow, consisting of the line through $A$ and $B$ and the conic through the remaining four points in the plane.
Therefore, the four lines bounding the facet of $P$ contained in $S$ are obtained as follows: two, red and blue, are the exceptional divisors over the points $A$ and $B$. One, in yellow, is the strict transform of the line through $A$ and $B$, and the fourth one, in green, is the strict transform of the conic through $A,B,C,D,E$.
Finally, the hyperplane $L_0$ corresponds to the elliptic curve through the six points, shown in black (dashed).
Note that all the curves need to be chosen in a way that they do not intersect the region $P'$, bounded by the line through $A$ and $B$ and the part of the conic through $A,\ldots,E$ between $A$ and $B$.
All the relevant plane curves are depicted in Figure \ref{fig:delPezzo}.

We now describe the residual arrangement of $P$. It is combinatorially related to that of the cube: the residual arrangement of a three-dimensional cube consists of three lines, namely the intersection of every pair of boundary divisors containing opposing facets. Similarly, for our set $P$, the residual arrangement  contains two lines, namely $L_1\cap L_2$ and $L_3\cap L_4$ as well as the curve $S\cap L_0$, a smooth curve of genus one, seen in black in Figure \ref{fig:surface}.
Moreover, it also contains the four conics, which arise as irreducible components of the intersections of the four boundary divisors $L_1,\ldots L_4$ with $S$. (Recall, every such intersection decomposes into a conic and a line containing an edge of $P$.)
The residual arrangement is interpolated by a unique homogeneous polynomial of degree $4$ in $u_0,\ldots,u_3$, which is the adjoint $\operatorname{adj}(P)$ of $P$.

We compute that $P$ is a positive geometry in the sense of Definition \ref{def:pos_geom_lam}, with the unique canonical form $\omega_P$ of $P$ given by
\begin{align}
    \omega_P=\frac{\operatorname{adj}(P)}{L_0L_1L_2L_3L_4S}\:  du_0\wedge du_1 \wedge du_2,
\end{align}
where the scaling of $\operatorname{adj}(P)$ is chosen so that all its residues at the vertices of $P$ evaluate to $\pm 1$. In particular, taking the residues of $\omega_P$ along the facets of $P$ recovers their canonical forms. Our code verifying this is available at \cite{zenodo}. By Remark \ref{rem:holomorph}, the uniqueness of the canonical form follows from the fact that there are no non-zero global holomorphic top-degree differential forms on $\mathbb{P}^3$.

Next, let us show that the genus of the pair $(\PP^3,\partial_a P)$ equals one.
First, notice that we constructed the example in such a way that $\partial_a P$ is a simple normal crossing divisor in $\PP^3$.
Therefore, by \cite[Corollary 3.14]{brown-dupont} the genus of the pair $(\PP^3,\partial_a P)$ is bounded from above by the sum of the genera of the intersections of the irreducible components of $\partial_a P$.
Since all boundary divisors are smooth projective varieties (which have vanishing genus), the only non-zero genus contribution stems from two-fold intersections. 
As described above, we have a single elliptic curve $S\cap L_0$ of genus one, and a collection of reducible cubic curves and lines, all of which have vanishing genus.
Therefore, $g(\PP^3,\partial_a P)\leq 1$.
It remains to show that this bound is tight.
Let $D=\bigcup_{i=0}^5D_i=\partial_aP$, with $D_5=S$ and $D_i=L_i$ for $i=0,1,2,3,4$.
Consider the spectral sequence converging to relative cohomology, see e.g. \cite[Equation (12)]{brown-dupont}:
\begin{align}\label{eq:spectral_sequence}
    E^{p,q}_1 = \bigoplus_{|I|=p}H^q(D_I) \quad \Longrightarrow\quad H^{p+q}(\PP^3,D),
\end{align}
where $I\subset \{0,1,2,3,4,5\}$ and $D_I=\bigcap_{i\in I}D_i$. Since $D$ is a normal crossing divisor, this sequence degenerates on the second page, that is all higher differentials between the pages vanish, i.e. $\partial_i=0$ for $i\geq 2$.
For $\ell\notin I$, the obvious inclusion maps $D_{I\cup \ell}\rightarrow D_I$ induce morphisms in cohomology $\delta^q_p:\bigoplus_{|I|=p}H^q(D_I)\rightarrow \bigoplus_{|J|=p+1}H^q(D_J)$.
Therefore, the weight $q$ part of $H^{p+q}(\PP^3,D)$ is computed by $\ker \delta^q_p/\im \delta^q_{p-1}$.
We now show that the weight one part of $H^3(\PP^3,D)$ is at least one-dimensional. So let $q=1$, and thus $p=2$.
Note the $\ker \delta_1^2=H^1(S\cap L_0)\oplus \bigoplus_{1\leq i<j \leq 4}H^1(L_i\cap L_j)$, as the codomain of $\delta_1^2$ is the first cohomology group of a collection of points, which vanishes trivially.
Similarly, the domain of $\delta^1_1$ is the direct sum of the first cohomology groups of the Del Pezzo surface $S$ and the five planes $L_i$.
Since all of these are smooth rational varieties, their first cohomology vanishes. 
We have $\im \delta^1_1=0$.
Using the upper bound for the genus of $(\PP^3,D)$ from above, we get that
\begin{align}\label{eq:weight_one_part}
\operatorname{gr}^W_1(H^3(\PP^3,D))\cong \frac{\ker \delta^2_1}{\im \delta^1_1}\cong H^1(E),
\end{align}
where $E$ is the elliptic curve obtained obtained by intersecting $S\cap L_0$.
This shows that $g(\PP^3,D)=g(E)=1$.

Moreover, the argument above implies that the the map $R:\Omega^3_{\log}(\PP^3\setminus\partial_a P)\rightarrow \operatorname{gr}^W_0H_3(\PP^3,\partial_a P)$, see \cite[Definition 2.4]{brown-dupont}, has a one-dimensional kernel.
We can see that the kernel comes from the genus one curve $E=S\cap L_0$ in the residual arrangement of $P$ as follows.
Consider the $3$-form $\kappa\in \Omega^3_{\log}(\PP^3\setminus\partial_a P)$ given as 
\[
\kappa = \frac{1}{SL_0}du_0\wedge du_1 \wedge du_2,
\]
with a local system of parameters on $\PP^3\setminus\partial_a P$ given by $u_0, u_1$ and $u_2$.
By \cite[Equation (21)]{brown-dupont} we identify $\Omega^3_{\log}(\PP^3\setminus\partial_a P)\cong F^3H^3(\PP^3\setminus\partial_a P)$, whose weight one part, see Equation \eqref{eq:weight_one_part}, is then given by $\kappa$ under this identification.
Then, $R(\kappa)=0$, as the codomain of $R$ is $\operatorname{gr}^W_0H_3(\PP^3,\partial_aP)$, and since the kernel of $R$ is one-dimensional, $\kappa$ is a generator of it.

In \cite[Section 2.7]{brown-dupont} the authors suggest to consider the one-dimensional space $\omega_P+\lambda\kappa$ with $\lambda\in \C$ as the space of candidate canonical forms for $P$.
However, for any $\lambda\neq 0$ the residue of the form $\omega_P+\lambda\kappa$ along the linear divisor $L_0$ has a pole along the residual elliptic curve $E=S\cap L_0$ and thus does not recover the canonical form of the facet of $P$ contained in $L_0$.
Therefore, we must have $\lambda=0$, and thus recover the unique canonical form $\omega_P$.

\begin{remark} \label{rem:final}
Although the pair $(\mathbb{P}^3, \partial_a P)$ has positive genus, one can pick a different ambient variety for the semi-algebraic set $P$ to obtain a genus zero pair. Namely, let $X$ be the blow up of $\mathbb{P}^3$ along the elliptic curve $E$, and $\widetilde Y$ be the strict transform of $\partial_a P$. 
By \cite[Corollary 3.14]{brown-dupont}, we can bound the genus of the pair $(X,\widetilde Y)$ as the sum of the genus of $X$ and the genera of all intersections of the irreducible components of $\widetilde Y$.
To that end, write $\widetilde Y_i$ for the strict transforms of $L_i$ for $i=0,\ldots,4$, and $\widetilde Y_S$ for that of $S$. First, note that $\widetilde{Y}_S \cong S$ and $\widetilde{Y}_0 \cong L_0$, as these two boundary divisors contain the blow up center $E$ as an irreducible subvariety of codimension one. 
Since the divisors $L_i$ for $i=1,\ldots, 4$ intersect $E$ in three points each, their strict transforms are isomorphic to the blow up of $\mathbb{P}^2$ in three points, which is a smooth compact rational variety. 
Then, note that $\widetilde Y_S\cap \widetilde Y_0=\varnothing$, as $S$ and $L_0$ intersect transversally in the blow up center $E$.
Moreover, $\widetilde Y_S\cap \widetilde Y_i\cong S\cap L_i$ for $i=1,\ldots,4$, as the curves $S\cap L_i$ meet $E$ in a finite collection of smooth points.
Similarly, we get $\widetilde Y_i\cap \widetilde Y_j\cong L_i\cap L_j\cong \PP^1$ for $i,j\neq 0$, as these lines do not intersect $E$.
We have just shown that $X$ as well as all intersections of the components of $\widetilde Y$ are smooth rational varieties, hence their genera vanish and $g(X,\widetilde Y)=0$.

The corresponding canonical map would produce a canonical form for the preimage of $P$ under the blow up map. Since the blow up is a morphism from $X$ to $\mathbb{P}^3$, by functoriality of the canonical map the pushforward of this form would then be the canonical form of $P$ in~$\mathbb{P}^3$. 
This suggests that one can sometimes show that a pair $(X, X_{\geq 0})$ is a positive geometry by considering the pair of varieties $(X', Y')$, where $X'$ and $Y'$ are related to $X$ and $\partial_a X_{\geq 0}$ in a natural way. We thank Cl\'ement Dupont for pointing this out. We are optimistic that this phenomenon is rather general, and hope to apply it to the amplituhedron in future work.   
\end{remark}

\bigskip
\noindent\textbf{Acknowledgements.} The authors would like to thank Daniele Agostini, Cl\'ement Dupont, Pavel Galashin, Leonie Kayser, Thomas Lam and Matteo Parisi for helpful discussions. DP is supported by the European Union (ERC, UNIVERSE PLUS, 101118787). Views and opinions expressed are however those of the author(s) only and do not necessarily reflect those of the European Union or the European Research Council Executive Agency. Neither the European Union nor the granting authority can be held responsible for them.

\printbibliography

\noindent{\bf Authors' addresses:}
\medskip

\noindent Joris Koefler, MPI-MiS Leipzig
\hfill {\tt joris.koefler@mis.mpg.de}

\noindent Dmitrii Pavlov, MPI for Physics, Garching bei M\"unchen
\hfill {\tt pavlov@mpp.mpg.de}

\noindent Rainer Sinn, Universit\"at Leipzig
\hfill {\tt rainer.sinn@uni-leipzig.de}

\end{document}